\numberwithin{equation}{section}
\theoremstyle{plain}
  \newtheorem{theorem}{Theorem}[section]
  \newtheorem{corollary}[theorem]{Corollary}
  \newtheorem{lemma}[theorem]{Lemma}
  \newtheorem{proposition}[theorem]{Proposition}
\theoremstyle{definition}
  \newtheorem{definition}[theorem]{Definition}
  \newtheorem{example}[theorem]{Example}
\theoremstyle{remark}
  \newtheorem{remark}[theorem]{Remark}
\let\noi=\noindent
\newcommand{\Q}{\mathbb{Q}}
\newcommand{\Z}{\mathbb{Z}}
\newcommand{\N}{\mathbb{N}}
\newcommand{\g}{\Gamma}
\newcommand{\Ro}{\mathcal{R}}
\newcommand{\D}{\mathcal{D}}
\title{On common extensions of valued fields}
\author{W. Mahboub\\
Lebanese International University\\
Mouseitbeh - PO Box: 146404\\
Mazraa, Beirut, Lebanon.
\and
 A. Mansour\\
Lebanese International University\\
Mouseitbeh - PO Box: 146404\\
Mazraa, Beirut, Lebanon.
\and
M. Spivakovsky\\
Institut de Math\'ematiques de Toulouse\\
UMR 5219 du CNRS,\\
Universit\'e Paul Sabatier\\
118, route de Narbonne\\
31062 Toulouse cedex 9, France and\\
UMI CNRS 2001 LaSol, UNAM.\\email:
mark.spivakovsky@math.univ-toulouse.fr }
\begin{document}
\maketitle
\section{Introduction}

Let $(K,v)$ be a valued field, $\bar{K}$ an algebraic closure of $K$ and $\bar{v}$ an extension of $v$ to $\bar{K}$. Let $X$ be a transcendental element over $K$. With the aim of giving a characterization of residual transcendental extensions of $v$ to $K(X)$ \cite{AZ},  V. Alexandru and A. Zaharescu introduce the notion of "a minimal pair of definition". They prove that  describing all such extensions is equivalent to describing all the minimal pairs $(a,\delta)\in\bar{K}\times\g_{\bar{v}}$ (see Definition \ref{minimalpairofdefinition} below), where $\g_{\bar{v}}$ is the value group of $\bar{v}$.\\

In \cite{APZ}, V. Alexandru, N. Popescu and A. Zaharescu investigate which pairs $(a,\delta)\in\bar{K}\times \g_{\bar{v}}$ are minimal pairs. Given an extension $w$ of $K$ to $K(X)$, the authors define a common extension of $\bar{v}$ and $w$ to $\bar{K}(X)$, and they prove that there exists an integer, denoted by $[K:w]$, depending only on $v$ and $w$, and that the number of common extensions $\bar{w}$ is less than or equal to $[K:w]$ (see \cite{APZ}, Corollary 2.3 and its proof).\\

Another way of understanding the extensions $w$ of $v$ to $K(X)$ is via the theory of key polynomials introduced by S. Mac Lane (see \cite{ML1} and \cite{ML2}). The theory was introduced by S. Mac Lane in the case of discrete valuations of rank $1$ and generalized by M. Vaqui\'e  to the case of arbitrary valuations. One important difference with the case of discrete rank $1$ valuations
is the presence of limit key polynomials. Another notion of key polynomials was introduced by F. H. Herrera, M. A. Olalla and M. Spivakovsky (see \cite{HOS} or \cite{HMOS}). Yet another new notion of key polynomials was introduced in \cite{DMS} and \cite{J1}. Comparison between these notions are given in \cite{DMS} and \cite{Ma}. For more information about the key polynomial theory and its applications see \cite{Na2}, \cite{CMT}, \cite{Kas}, \cite{Na1}, \cite{San}  and \cite{San2}.\\

It seems that the notions of key polynomials and minimal pairs are closely related. For recent studies on the relation between the two notions, see  \cite{J2} and \cite{V3}. From Theorem 1.1 and Proposition 3.1 of \cite{J2}, one can deduce that given a valuation $w$ which is residue-transcendental there exists a polynomial $Q$ such that $w=w_Q$  and every common extension $\bar{w}$ of
$\bar{v}$ and $w$ can be described by a pair of definition $(a,\delta)$ where $a$ is a root of $Q$ and $\epsilon(Q)$ (see Section \ref{keypolynomials} for the definition of $w_Q$ and $\epsilon(Q)$). As well, one can deduce that such a $Q$ must be the last key polynomial in a complete sequence of key polynomials for $w$ (see Section \ref{keypolynomials} for the definition of a complete sequence of key polynomials).\\

In this paper we study the relation between key polynomials and minimal pairs. We also describe the classification given in Section 3 of \cite{Ku} (``value-transcendental", ``residue-transcendental",  ``valuation-transcendental'' and  ``valuation-algebraic", see Definition \ref{valuation-} below) of all the possible extensions of $v$ from $K$ to $K(X)$  in terms of a complete sequence of key polynomials. Finally, we prove that the extension $w$ is residue-transcendental or value-transcendental if and only if the complete sequence of key polynomials for $w$ has a last element $Q$. In this case every common extension $\bar{w}$ of $\bar{v}$ and $w$ can be described by a pair of definition $(a,\delta)$ where $a$ is a root of $Q$. Moreover, we prove that in the case when the sequence of key polynomials does not admit a limit key polynomial, any root $a$ of $Q$ can be used to define a minimal pair $(a,\delta)$ that defines a common extension $\bar{w}$ of $\bar{v}$ and $w$.\\

In Section \ref{basics} we recall some basic facts about extensions of valuations. In Section \ref{minimalpairs} we give some properties relating common extensions and minimal pairs. Although the results of this section can also be found in \cite{AZ} and \cite{APZ}, we reproduce them here in order to make the paper as self-contained as possible.\\

In Section \ref{keypolynomials} we recall some basic results on key polynomials. We use the construction of a complete set of key polynomials given in \cite{J1} and summarize briefly the main results of \cite{J1} used in the sequel.\\

In Section \ref{kp_mp} we clarify the relation between the notion of key polynomials and the notion of minimal pair. We also describe the  classification of the extension $w$ in terms of the complete sequence of key polynomials of $w$. This is accomplished in Corollaries \ref{vt_kp} and \ref{rt_kpresult} and Theorem \ref{valuation_trans}. We also deduce that the number of common extensions
$\bar{w}$ of $\bar{v}$ and $w$ is less than or equal to the number of roots of the last key polynomial of the sequence (see Corollary \ref{val_trans_numext}).\\

Finally, in Section \ref{common_ext} we study the relation between the roots of two consecutive key polynomials. We show that in the case when the sequence does not contain limit key polynomials, every root $a$ of the last key polynomial in the sequence can be used to construct a common extension $\bar{w}$ of $\bar{v}$ and $w$ (see Corollary \ref{allroots}).

\section{Basics and Notation}\label{basics}

Throughout this paper, we fix a valued field $(K,v)$, an algebraic closure $\bar{K}$ of $K$ and an extension $\bar{v}$ of $v$ to
$\bar{K}$. We also fix a variable $X$ and an extension $w$ of $v$ to $K(X)$.\\

By Proposition 2.1 in \cite{APZ}, there exists a common extension of $\bar{v}$ and $w$ to $\bar{K}(X)$, that is, a valuation
$\bar{w}$ on $\bar{K}(X)$ that is equal to $\bar{v}$ on $\bar{K}$ and to $w$ on $K(X)$.\\

Let $\bar{w}$ be a common extension of $w$ and $\bar{v}$ to $\bar{K}(X)$.\\

We denote by $\g_{v},\ \g_{\bar{v}},\ \g_{w}$ and $\g_{\bar{w}}$ the respective value groups of $v,\ \bar{v},\ w$ and $\bar{w}$. We have natural embeddings $\g_{v}\subseteq\g_{\bar{v}}\subseteq\g_{\bar{w}}$ and
$\g_{v}\subseteq\g_{w}\subseteq\g_{\bar{w}}$.\\

We denote by $k_{v},\ k_{\bar{v}},\ k_{w}$ and $k_{\bar{w}}$ the respective residue fields of $v,\ \bar{v},\ w$ and $\bar{w}$. We have natural extensions $k_{v}\subseteq k_{\bar{v}}\subseteq k_{\bar{w}}$, and $k_{v}\subseteq k_{w}\subseteq k_{\bar{w}}$.\\

Recall the following definitions from \cite{Ku}:
\begin{definition}\label{valuation-} The extension $w$ of $v$ to $K(X)$ is said to be {\bf valuation-algebraic} if $\frac{\g_w}{\g_v}$ is a torsion group and $k_w$ is algebraic over $k_v$. The extension $w$ is said to be {\bf value-transcendental} if $\frac{\g_w}{\g_v}$ has rational rank 1 and $k_w$ is algebraic over $k_v$. The extension $w$ is said to be {\bf residue-transcendental} if $k_w$ has transcendence degree 1 over $k_v$ and $\frac{\g_w}{\g_v}$ is a torsion group. We will combine the value-transcendental case and the residue-transcendental case by saying that $w$ is {\bf valuation-transcendental} if either $\frac{\g_w}{\g_v}$ has rational rank 1 or $k_w$ has transcendence degree 1 over $k_v$.
\end{definition}
For a polynomial $f(X)\in K[X]$, we define the set $\Ro(f)$ by
$$
\Ro(f):=\left\{a\in\bar{K}\ /\ f(a)=0\right\}.
$$
For an element $y$ we denote by $y^*$ its image in the corresponding residue field.\\

We note that
\begin{equation}
\g_{\bar{v}}=\g_{v}\bigotimes_{\Z}\Q\label{eq:gammavbar}
\end{equation}
and that $k_{\bar{v}}$ is an  algebraic closure of $k_{v}$.\\

We define the set $M_{\bar{w}}:=\{\bar{w}(X-a)\ /\ a\in \bar{K}\}$. \\

Let $a\in\bar{K}$ and let $\delta$ be an element in an ordered group containing $\g_{\bar{v}}$. We define the valuation
$w_{(a,\delta)}$ in the following manner:\\
 For a polynomial $f(X)\in \bar{K}[X]$, write the Taylor expansion of $f$:
$$
f(X)=a_n(X-a)^{n}+\dots+a_1(X-a)+a_0.
$$
Put $w_{(a,\delta)}(f(X))=\inf_{0\leq j\leq n}\{\bar{v}(a_j)+j\delta\}$.\\

We define the set $S_{(a,\delta)}(f):=\{j\in\{1,\dots,n\}\ /\ \bar{v}(a_j)+j\delta=w_{(a,\delta)}(f(X))\}$.\\

We note that if we fix $a\in \bar{K}$ and $\delta=\bar{w}(X-a)$, then for all $f\in \bar{K}[X]$ we have
\begin{equation}
\bar{w}(f(X))\geq w_{(a,\delta)}(f(X)).\label{eq:barw>wadelta}
\end{equation}
If the inequality is strict then $\#S_{(a,\delta)}(f)>1$.

\begin{remark}
In the literature, minimal pairs are defined for residue transcendental valuations. In our case, we give the same definition but more generally for valuation-transcendental valuations.
\end{remark}

\begin{definition}
For a valuation $\mu$ of $\bar{K}(X)$, we say that the pair $(a,\delta)$ is {\bf a pair of definition for the valuation} $\mu$ if
$\mu=w_{(a,\delta)}$. 
\end{definition}

\begin{lemma}\label{pairofdefinition}
Let $(a,\delta)$ be a pair of definition for a valuation $\mu$. Let $b\in \bar{K}$, and let $\delta'$ be an element of an ordered group containing $\g_{\bar{v}}$ as an ordered subgroup. Then $(b,\delta')$ is a pair of definition for $\mu$ if and only if $\delta'=\delta$ and $\bar{v}(a-b)\geq \delta$.
\end{lemma}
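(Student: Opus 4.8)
The plan is to prove the two implications separately; both are elementary, the whole content being concentrated in the behaviour of $w_{(a,\delta)}$ on the two linear polynomials $X-a$ and $X-b$.

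For the ``only if'' direction, suppose $(b,\delta')$ is also a pair of definition for $\mu:=w_{(a,\delta)}$. First I would record the trivial Taylor computations: expanding $X-b=(X-a)+(a-b)$ about $a$ gives $w_{(a,\delta)}(X-b)=\inf\{\bar{v}(a-b),\delta\}$, while $w_{(a,\delta)}(X-a)=\delta$; symmetric formulas hold for $w_{(b,\delta')}$. Since $w_{(b,\delta')}=w_{(a,\delta)}$ as valuations, their common value group contains both $\delta$ and $\delta'$, so these are comparable; evaluating $\mu$ on $X-b$ in both ways then yields $\delta'=\inf\{\bar{v}(a-b),\delta\}$, hence $\delta'\le\delta$, and evaluating $\mu$ on $X-a$ in both ways yields $\delta=\inf\{\bar{v}(a-b),\delta'\}$, hence $\delta\le\delta'$. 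Thus $\delta=\delta'$, and substituting this back into $\delta'=\inf\{\bar{v}(a-b),\delta\}$ forces $\bar{v}(a-b)\ge\delta$.

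For the ``if'' direction, assume $\delta'=\delta$ and $\bar{v}(a-b)\ge\delta$; I must check $w_{(b,\delta)}(f)=w_{(a,\delta)}(f)$ for every $f\in\bar{K}[X]$. I would argue by a direct change of expansion point: writing $f=\sum_j a_j(X-a)^j$ and re-expanding via $(X-a)^j=\sum_{i\le j}\binom{j}{i}(X-b)^i(b-a)^{j-i}$, the coefficient of $(X-b)^i$ is $c_i=\sum_{j\ge i}\binom{j}{i}a_j(b-a)^{j-i}$, and the ultrametric inequality together with $\bar{v}(b-a)\ge\delta$ gives $\bar{v}(c_i)\ge\min_{j\ge i}\{\bar{v}(a_j)+(j-i)\delta\}$, so $\bar{v}(c_i)+i\delta\ge w_{(a,\delta)}(f)$ for every $i$ and therefore $w_{(b,\delta)}(f)\ge w_{(a,\delta)}(f)$. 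Since $\bar{v}(a-b)=\bar{v}(b-a)\ge\delta$, the roles of $a$ and $b$ are interchangeable, so the reverse inequality holds as well, giving equality. Alternatively, one can observe that $\bar{v}(a-b)\ge\delta$ makes $w_{(a,\delta)}(X-b)=\delta$, so the inequality $(\ref{eq:barw>wadelta})$, read for an arbitrary valuation of $\bar{K}(X)$ extending $\bar{v}$, applies with $w_{(a,\delta)}$ in the role of $\bar{w}$ and $b$ in the role of $a$, yielding $w_{(a,\delta)}\ge w_{(b,\delta)}$ on $\bar{K}[X]$, and symmetrically the reverse.

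There is no serious obstacle here. The one point deserving attention is the inequality $(j-i)\bar{v}(b-a)\ge(j-i)\delta$ — equivalently the identity $w_{(a,\delta)}(X-b)=\delta$ — which uses only $j-i\ge0$ and requires no sign assumption on $\delta$; this is exactly where the hypothesis $\bar{v}(a-b)\ge\delta$ enters, and it is also the step that breaks down (producing $w_{(a,\delta)}(X-b)<\delta$, hence a genuinely different valuation) when $\bar{v}(a-b)<\delta$.
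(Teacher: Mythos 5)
Your proof is correct. The ``only if'' direction is essentially the paper's: both evaluate $\mu$ on $X-a$ and $X-b$ using the two pairs of definition, obtain $\delta=\inf\{\delta',\bar v(a-b)\}$ and $\delta'=\inf\{\delta,\bar v(a-b)\}$, and extract $\delta=\delta'$ and $\bar v(a-b)\ge\delta$. In the ``if'' direction you take a genuinely different route. The paper observes that, $w_{(a,\delta)}$ and $w_{(b,\delta)}$ being valuations on $\bar K[X]$ extending $\bar v$, they are determined by their values on the linear polynomials $X-c$, and then verifies $\inf\{\delta,\bar v(a-c)\}=\inf\{\delta,\bar v(b-c)\}$ directly from $\bar v(a-b)\ge\delta$. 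Your main argument instead works with an arbitrary $f$: you re-expand the Taylor series about $b$, estimate the new coefficients $c_i=\sum_{j\ge i}\binom{j}{i}a_j(b-a)^{j-i}$ by the ultrametric inequality together with $\bar v(b-a)\ge\delta$ and $j-i\ge 0$ (tacitly also using that the integer binomial coefficients have non-negative $\bar v$-value), conclude $w_{(b,\delta)}(f)\ge w_{(a,\delta)}(f)$, and close by symmetry. This buys you an argument that never needs to know $w_{(a,\delta)}$ is a valuation, whereas the paper's reduction to linear polynomials (and your second, alternative argument via inequality (\ref{eq:barw>wadelta})) does rely on that fact. Both routes are sound; yours is slightly more elementary and self-contained, the paper's slightly shorter.
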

\begin{proof}
Suppose first that $\delta'=\delta$ and $\bar{v}(a-b)\geq \delta$. It is sufficient to prove that for all $c\in\bar{K}$, we have
\begin{equation*}
\mu(X-c)=\inf\{\delta,\ \bar{v}(b-c)\}.
\end{equation*}
Take an element $c\in\bar{K}$. We have
\begin{equation*}
\mu(X-c)=\inf\{\delta,\ \bar{v}(a-c)\}.
\end{equation*}
Therefore we have to prove that 
\begin{equation}\label{pairofdefinition_eq}
\inf\{\delta,\ \bar{v}(b-c)\}=\inf\{\delta,\ \bar{v}(a-c)\}.
\end{equation}
We will use the equality $a-c=a-b+b-c$ and the fact that $\bar{v}(a-b)\geq \delta$.\\

If $\delta>\bar{v}(b-c)$ then $\bar{v}(a-b)>\bar{v}(b-c)=\bar{v}(a-c)$, and (\ref{pairofdefinition_eq}) is proved.\\
Otherwise, if $\delta\leq\bar{v}(b-c)$ then $\bar{v}(a-c)\geq \inf\{\bar{v}(a-b),\ \bar{v}(a-c)\}\geq \delta$, and again (\ref{pairofdefinition_eq}) is proved.\\

Conversely, suppose that  $(b,\delta')$ is a pair of definition for $\mu$. We have:
\begin{equation*}
\delta=\mu(X-a)=w_{(b,\delta')}(X-a)=\inf\{\delta',\ \bar{v}(a-b)\},
\end{equation*}
hence $\delta\leq\bar{v}(a-b)$ (this proves the second statement), and $\delta\leq\delta'$. On the other hand, we have
\begin{equation*}
\delta'=\mu(X-b)=w_{(a,\delta)}(X-b)=\inf\{\delta,\ \bar{v}(a-b)\},
\end{equation*}
hence $\delta'\leq\delta$ and we get the desired equality.
\end{proof}

Let $\mu=w_{(a,\delta)}$ for a certain pair $(a,\delta)$ as above. We define the degree of $\mu$
$$
\D(\mu):=min\{[K(b):K]/\ b\in \bar{K}, \mu=w_{(b,\delta)}\}.
$$
\begin{definition}\label{minimalpairofdefinition}
A pair of definition $(a,\delta)$ for $\mu$ is said to be {\bf minimal} if $[K(a):K]=\D(\mu)$. We also say in this case that $(a,\delta)$ is a minimal pair of definition for $\mu$.
\end{definition}

\section{Minimal Pairs}\label{minimalpairs}

In this section, we give some properties of common extensions and minimal pairs.\\
Keep the notation of the previous section.\\

The following result is in \cite{AZ}, Proposition 1.1.

\begin{lemma}\label{rt_lem}
Suppose that $\g_{\bar{v}}=\g_{\bar{w}}$ and let $a\in\bar{K}$. The following conditions are equivalent:
\begin{enumerate}[(a)]
\item $\bar{w}(X-a)=Max ( M_{\bar{w}})$.
\item for each $b\in\bar{K}$ with $\bar{w}(X-a)=\bar{v}(b)$, the element $\left(\frac{X-a}{b}\right)^*$ is transcendental over $k_{\bar{v}}$.
\end{enumerate}
\end{lemma}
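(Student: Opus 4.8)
The plan is to prove both implications by contraposition, using throughout that the hypothesis $\g_{\bar v}=\g_{\bar w}$ forces, for every $a\in\bar K$, the existence of some $b\in\bar K$ with $\bar v(b)=\bar w(X-a)$; for such $b$ the quotient $\frac{X-a}{b}$ has $\bar w$-value $0$, so $\left(\frac{X-a}{b}\right)^*$ is a well-defined element of $k_{\bar w}^\times$. I would first dispose of the dependence on $b$ in (b): if $b,b'\in\bar K$ both have value $\bar w(X-a)$, then $\frac{b}{b'}$ has value $0$ and $\left(\frac{X-a}{b'}\right)^*=\left(\frac{b}{b'}\right)^*\left(\frac{X-a}{b}\right)^*$ with $\left(\frac{b}{b'}\right)^*\in k_{\bar v}^\times$; since transcendence over $k_{\bar v}$ is unaffected by multiplication by a nonzero element of $k_{\bar v}$, the residues $\left(\frac{X-a}{b}\right)^*$ are either all transcendental over $k_{\bar v}$ or all algebraic, so ``for each $b$'' in (b) may be read as ``for one (equivalently, for some) $b$''.

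For $\neg(a)\Rightarrow\neg(b)$: suppose $\bar w(X-a)$ is not the maximum of $M_{\bar w}$, so there is $a'\in\bar K$ with $\bar w(X-a')>\bar w(X-a)$. From $a'-a=(X-a)-(X-a')$ and the fact that the two summands have distinct $\bar w$-values I get $\bar v(a'-a)=\bar w(X-a)=\bar v(b)$, hence $\frac{a'-a}{b}$ is a unit of the valuation ring of $\bar v$. Since $\bar w\!\left(\frac{X-a'}{b}\right)=\bar w(X-a')-\bar v(b)>0$, reduction of $\frac{X-a}{b}=\frac{X-a'}{b}+\frac{a'-a}{b}$ gives $\left(\frac{X-a}{b}\right)^*=\left(\frac{a'-a}{b}\right)^*\in k_{\bar v}^\times$, so the residue lies in $k_{\bar v}$ and (b) fails.

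For $\neg(b)\Rightarrow\neg(a)$: suppose $t:=\left(\frac{X-a}{b}\right)^*$ is algebraic over $k_{\bar v}$; let $P(Y)\in k_{\bar v}[Y]$ be monic of degree $n\ge 1$ with $P(t)=0$, and lift it to a monic $\tilde P(Y)\in\mathcal O_{\bar v}[Y]$ of degree $n$. Put $g(X):=b^n\,\tilde P\!\left(\frac{X-a}{b}\right)\in\bar K[X]$; expanding in powers of $X-a$ shows $g$ is monic of degree $n$ in $X$. Every coefficient of $\tilde P$ has non-negative $\bar v$-value and $\frac{X-a}{b}$ has $\bar w$-value $0$, so $\bar w\!\left(\tilde P\!\left(\frac{X-a}{b}\right)\right)\ge 0$ and its residue equals $P(t)=0$; hence $\bar w(g(X))>n\,\bar v(b)=n\,\bar w(X-a)$. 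Factoring $g(X)=\prod_{j=1}^n(X-a_j)$ over the algebraically closed field $\bar K$ gives $\sum_{j=1}^n\bar w(X-a_j)=\bar w(g(X))>n\,\bar w(X-a)$, so some $a_j$ has $\bar w(X-a_j)>\bar w(X-a)$; thus $\bar w(X-a)$ is not $\max M_{\bar w}$, i.e. (a) fails. (This argument also covers the degenerate case in which $M_{\bar w}$ has no maximum: then $\neg(a)$ holds for every $a$, whence $\neg(b)$ for every $a$, and the stated equivalence holds vacuously.)

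All the individual steps are short; the one requiring an actual idea is $\neg(b)\Rightarrow\neg(a)$, where the algebraic dependence relation for the residue $t$ must be promoted to a genuine monic polynomial $g(X)\in\bar K[X]$ whose $\bar w$-value is pushed strictly above $n\,\bar w(X-a)$, after which the factorization over $\bar K$ produces the desired translate $X-a_j$ of larger value. I expect the only delicate bookkeeping to be checking that $g$ is indeed monic of degree exactly $n$ (so that it has $n$ roots in $\bar K$) and that its residue is $P(t)$; everything else is a direct computation with the valuation axioms.
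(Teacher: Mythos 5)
Your proof is correct, and your $\neg(a)\Rightarrow\neg(b)$ direction is essentially identical to the paper's argument (which presents it as: assume (b), assume a larger $\bar w(X-c)$, derive a contradiction). The genuine difference is in $\neg(b)\Rightarrow\neg(a)$. The paper exploits the fact that $k_{\bar v}$ is \emph{algebraically closed} to upgrade ``$t$ is algebraic over $k_{\bar v}$'' immediately to ``$t\in k_{\bar v}$''; one then lifts $t$ to $c\in\bar K$ and observes in a single line that $\bar w(X-a-cb)>\bar w(X-a)$, contradicting (a). You instead keep the degree-$n$ monic relation $P(t)=0$, lift it to a monic $\tilde P\in\mathcal O_{\bar v}[Y]$, form $g(X)=b^n\tilde P\bigl(\tfrac{X-a}{b}\bigr)$, show $\bar w(g)>n\,\bar w(X-a)$, and then factor $g$ over $\bar K$ to extract a linear factor $X-a_j$ of strictly larger value. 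Both arguments use the same underlying resource (algebraic closedness), but at different levels: the paper at the residue field, you at $\bar K$ itself. The paper's route is shorter; yours is more robust in that it would survive a weakening to ``$k_{\bar v}$ relatively algebraically closed'' and it packages the lifting-and-comparison step into a single polynomial identity, but in the present setting it reproves, in effect, the easy fact that $k_{\bar v}$ is algebraically closed. The preliminary paragraph showing that the truth of (b) is independent of the choice of $b$ is correct and a sensible hygiene check, though the paper's formulation makes it unnecessary since both directions there pick or are handed a specific $b$.
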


\begin{proof}
Suppose (a) is satisfied. Let $b\in\bar{K}$ be such that $\bar{w}(X-a)=\bar{v}(b)$. Put $t=\left(\frac{X-a}{b}\right)^*$. If $t$ is algebraic over $k_{\bar{v}}$ then $t\in k_{\bar{v}}$, since $k_{\bar{v}}$ is algebraically closed. Choose $c\in \bar{K}$ so that $c^*=t$. Now we have $\bar{w}\left(\frac{X-a}{b}-c\right)>0$, that is, $\bar{w}(X-a-cb)>\bar{w}(X-a)$, which is a contradiction.\\

Suppose (b) is satisfied and suppose that there exists $c\in \bar{K}$ such that $\bar{w}(X-c)>\bar{w}(X-a)$. Then
$\bar{w}(X-a+a-c)>\bar{w}(X-a)=\bar{v}(a-c)$. Therefore we have $\bar{w}\left(\frac{X-a}{c-a}-1\right)>0$ and hence
$\left(\frac{X-a}{c-a}\right)^*=1$ in $K_{\bar{v}}$, which is a contradiction.
\end{proof}

\begin{lemma}\label{vt_lem}
Suppose that $\g_{\bar{v}}\subsetneqq\g_{\bar{w}}$. Then $Max ( M_{\bar{w}})$ exists and $Max ( M_{\bar{w}})\notin \g_{\bar{v}}$. Moreover, $Max ( M_{\bar{w}})$ is the unique element in $M_{\bar{w}}$ that does not belong to $\g_{\bar{v}}$.
\end{lemma}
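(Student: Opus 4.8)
The plan is to use the elementary identity $X-a=(X-b)+(b-a)$ for $a,b\in\bar K$ together with the fact that $\bar w$ restricts to $\bar v$ on $\bar K$.

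First I would show that $M_{\bar w}\not\subseteq\g_{\bar v}$. Every element of $\bar K(X)^\times$ is of the form $f/g$ with $f,g\in\bar K[X]$ nonzero, so $\g_{\bar w}$ is generated as a group by the values $\bar w(f)$, $f\in\bar K[X]\setminus\{0\}$. Since $\bar K$ is algebraically closed, each such $f$ splits as $f=c\prod_i(X-a_i)$ with $c\in\bar K$, $c\neq 0$, whence $\bar w(f)=\bar v(c)+\sum_i\bar w(X-a_i)$. Consequently $\g_{\bar w}$ is the subgroup generated by $\g_{\bar v}$ and $M_{\bar w}$; if $M_{\bar w}$ were contained in $\g_{\bar v}$ we would get $\g_{\bar w}=\g_{\bar v}$, contradicting the hypothesis. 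So there is $a_0\in\bar K$ with $\gamma_0:=\bar w(X-a_0)\notin\g_{\bar v}$.

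The second step is the key computation: if $a,b\in\bar K$ satisfy $\bar w(X-a)\neq\bar w(X-b)$, then
\[
\bar v(a-b)=\bar w(a-b)=\bar w\big((X-b)-(X-a)\big)=\min\{\bar w(X-a),\ \bar w(X-b)\},
\]
so the minimum of any two distinct elements of $M_{\bar w}$ lies in $\g_{\bar v}$. Taking $b=a_0$: if some $\bar w(X-b)$ were $>\gamma_0$ then $\gamma_0$ would be this minimum, hence in $\g_{\bar v}$ --- impossible; therefore $\gamma_0\ge\bar w(X-b)$ for every $b\in\bar K$, i.e.\ $\gamma_0=Max(M_{\bar w})$, and $\gamma_0\notin\g_{\bar v}$. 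For uniqueness, if $\bar w(X-b)\notin\g_{\bar v}$ then $\bar w(X-b)$ and $\gamma_0$ cannot be distinct (otherwise one of them, their minimum, would lie in $\g_{\bar v}$), so $\bar w(X-b)=\gamma_0$.

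I do not expect a real obstacle: the argument is a direct application of the ultrametric inequality and the identity $\bar w|_{\bar K}=\bar v$. The only place needing a little care is the first step, where one must use that $\bar K$ is algebraically closed so that polynomials split into linear factors, guaranteeing that $M_{\bar w}$ generates $\g_{\bar w}$ over $\g_{\bar v}$ and hence that $M_{\bar w}\setminus\g_{\bar v}$ is nonempty.
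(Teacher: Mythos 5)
Your proof is correct and follows essentially the same route as the paper's: both locate some $a_0$ with $\bar{w}(X-a_0)\notin\g_{\bar v}$ by factoring a polynomial over $\bar K$, and then use the identity $(X-b)-(X-a)=a-b$ together with the ultrametric equality to show that the minimum of two distinct values in $M_{\bar w}$ lies in $\g_{\bar v}$, yielding both maximality and uniqueness. Your step producing $a_0$ (via the observation that $\g_{\bar w}$ is generated by $\g_{\bar v}$ and $M_{\bar w}$) spells out what the paper takes for granted when it simply posits an $f\in\bar K[X]$ with $\bar w(f)\notin\g_{\bar v}$, but otherwise the arguments coincide.
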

\begin{proof}
Let $f\in \bar{K}[X]$ be such that $\bar{w}(f(X))\notin \g_{\bar{v}}$. Write $f(X)=c\prod\limits_{i=1}^{n}(X-c_i)$. There exists $i$, $1\leq i\leq n$, such that $\bar{w}(X-c_i)\notin \g_{\bar{v}}$.\\
Let $a=c_i$ for such an $i$ and let $\delta=\bar{w}(X-a)$.\\
 
Let $b\in \bar{K}$. If $\bar{w}(X-b)>\delta$. We can write $\bar{w}(X-a+a-b)>\delta$. Then $\bar{v}(a-b)$ is equal to $\delta$ which is impossible.\\

If $\bar{w}(X-b)< \delta$, then $\bar{w}(X-b)=\bar{v}(a-b)\in \g_{\bar{v}}$.

\end{proof}

It is easy to see that $w$ is a residue-transcendental extension of $v$ if and only if $\bar{w}$ is a residue-transcendental extension of $\bar{v}$. Similarly, $w$ is a value-transcendental extension of $v$ if and only if $\bar{w}$ is a value-transcendental extension of $\bar{v}$.\\

Using this fact together with Lemma \ref{rt_lem} and Lemma \ref{vt_lem} we deduce

\begin{proposition}\label{classification}
\begin{enumerate}[(a)]
\item $w$ is residue-transcendental extension of $v$ if and only if $\g_{\bar{v}}=\g_{\bar{w}}$ and $M_{\bar{w}}$ has a maximal element.
\item $w$ is value-transcendental extension of $v$ if and only if $\g_{\bar{v}}\subsetneqq\g_{\bar{w}}$. In this case, again, $M_{\bar{w}}$ has a maximal element.
\item $w$ is a valuation-algebraic extension of $v$ if and only if $M_{\bar{w}}$ does not have a maximal element.
\end{enumerate}
\end{proposition}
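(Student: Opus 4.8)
The plan is to package the three equivalences using the already-established Lemmas \ref{rt_lem} and \ref{vt_lem}, together with the remark immediately preceding the proposition that $w$ is residue-transcendental (resp.\ value-transcendental) over $v$ if and only if $\bar w$ is residue-transcendental (resp.\ value-transcendental) over $\bar v$. Since $\g_{\bar v}$ is divisible by \eqref{eq:gammavbar} and $k_{\bar v}$ is algebraically closed, the quotient $\g_{\bar w}/\g_{\bar v}$ is torsion-free, so ``torsion'' forces $\g_{\bar w}=\g_{\bar v}$; likewise $k_{\bar w}$ algebraic over $k_{\bar v}$ forces $k_{\bar w}=k_{\bar v}$. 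Thus over $\bar v$ the four cases of Definition \ref{valuation-} become the clean dichotomy: either $\g_{\bar w}=\g_{\bar v}$ (and then either $k_{\bar w}$ has transcendence degree $1$, the residue-transcendental case, or $k_{\bar w}=k_{\bar v}$, the valuation-algebraic case), or $\g_{\bar w}\supsetneq\g_{\bar v}$, which is exactly the value-transcendental case and in which $k_{\bar w}/k_{\bar v}$ is necessarily algebraic.

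For part (a): assume $w$, equivalently $\bar w$, is residue-transcendental, so $\g_{\bar v}=\g_{\bar w}$ and $k_{\bar w}$ has transcendence degree $1$ over $k_{\bar v}$. I would pick an element of $K(X)$, or rather of $\bar K(X)$, whose residue is transcendental over $k_{\bar v}$; after multiplying by a suitable constant of $\bar K$ it can be taken to have value $0$, and since $\g_{\bar w}=\g_{\bar v}$ is divisible one reduces to a linear polynomial, producing $a\in\bar K$ and $b\in\bar K$ with $\big((X-a)/b\big)^{*}$ transcendental over $k_{\bar v}$. By condition (b) $\Rightarrow$ (a) of Lemma \ref{rt_lem}, $\bar w(X-a)=\operatorname{Max}(M_{\bar w})$, so $M_{\bar w}$ has a maximum. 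Conversely, if $\g_{\bar v}=\g_{\bar w}$ and $M_{\bar w}$ has a maximum attained at some $a$, then (a) $\Rightarrow$ (b) of Lemma \ref{rt_lem} gives an element of $\bar K(X)$ with residue transcendental over $k_{\bar v}$, so $k_{\bar w}$ is not algebraic over $k_{\bar v}$; combined with $\g_{\bar w}=\g_{\bar v}$ this is precisely the residue-transcendental case. For part (b): if $w$ is value-transcendental then $\g_{\bar v}\subsetneq\g_{\bar w}$ by the translation to $\bar w$, and Lemma \ref{vt_lem} directly supplies the maximal element of $M_{\bar w}$. Conversely, if $\g_{\bar v}\subsetneq\g_{\bar w}$, then since $\g_{\bar w}/\g_{\bar v}$ is a subquotient of $\g_{\bar w}/\g_{\bar v}$ of rational rank at most $1$ (as $\operatorname{trdeg}_KK(X)=1$, by the Abhyankar inequality) and it is nonzero and torsion-free, it has rational rank exactly $1$; and $k_{\bar w}$ is then algebraic over $k_{\bar v}$, again by Abhyankar's inequality since the rational rank of $\g_{\bar w}/\g_{\bar v}$ already accounts for the single unit of transcendence degree. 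Hence $\bar w$, and therefore $w$, is value-transcendental. Part (c) is then the logical complement: by Proposition-level bookkeeping, $M_{\bar w}$ fails to have a maximal element if and only if we are in neither case (a) nor case (b), i.e.\ if and only if $\g_{\bar w}=\g_{\bar v}$ and $k_{\bar w}$ is algebraic over $k_{\bar v}$, which is the valuation-algebraic case; one only needs to observe that Lemmas \ref{rt_lem} and \ref{vt_lem} together show that whenever $M_{\bar w}$ has a maximum we are in case (a) or (b).

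The main obstacle I anticipate is the bookkeeping around rational rank and residue transcendence degree in the converse direction of (b): one must be sure that $\g_{\bar v}\subsetneq\g_{\bar w}$ forces the residue extension $k_{\bar w}/k_{\bar v}$ to be algebraic and the rational rank of $\g_{\bar w}/\g_{\bar v}$ to be exactly $1$, rather than allowing simultaneously a value jump and a residue jump. This is where the Abhyankar inequality $\operatorname{rat.rk}(\g_{\bar w}/\g_{\bar v})+\operatorname{trdeg}(k_{\bar w}/k_{\bar v})\le\operatorname{trdeg}_{\bar K}\bar K(X)=1$ is essential, and it should be invoked explicitly; with $\g_{\bar v}$ divisible and $k_{\bar v}$ algebraically closed there are no further subtleties, and everything else in the proof is a direct citation of Lemmas \ref{rt_lem} and \ref{vt_lem} and the remark equating the classification of $w$ with that of $\bar w$.
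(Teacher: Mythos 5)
Your proposal is correct, and it takes the same route the paper intends: the paper gives no separate proof of Proposition~\ref{classification}, merely stating that it is ``deduced'' from the remark equating the type of $w$ with that of $\bar w$ together with Lemmas~\ref{rt_lem} and~\ref{vt_lem}, and your write-up is exactly that deduction, filled in. Two small remarks for completeness: in the forward direction of (a), the reduction from an arbitrary $h\in\bar K(X)$ with transcendental residue to a single factor $\bigl((X-a)/b\bigr)^{*}$ is slightly compressed but goes through by factoring $h$ into linear terms over $\bar K$, normalising each factor by a constant of matching value (available since $\g_{\bar w}=\g_{\bar v}$), and noting that a product of algebraic residues is algebraic; and since Lemma~\ref{rt_lem}(b) is a universally quantified condition over all $b$ with $\bar v(b)=\bar w(X-a)$, one should observe that transcendence of $\bigl((X-a)/b\bigr)^{*}$ for one such $b$ implies it for all, because any two differ multiplicatively by a nonzero element of $k_{\bar v}$.
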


\begin{proposition}\label{max_ele} The set $M_{\bar{w}}$ has a maximal element $\delta$ if and only if $\bar{w}=w_{(a,\delta)}$ for some $(a,\delta)\in\bar{K}\times \g_{\bar{w}}$ with $\bar{w}(X-a)=\delta$.
\end{proposition}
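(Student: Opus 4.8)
The plan is to prove the two implications separately, the backward one being essentially immediate and the forward one reducing to a computation on linear polynomials.

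For the ``if'' direction, assume $\bar{w}=w_{(a,\delta)}$ with $\bar{w}(X-a)=\delta$. For any $b\in\bar{K}$ the Taylor expansion of $X-b$ centered at $a$ is $X-b=(X-a)+(a-b)$, so $\bar{w}(X-b)=w_{(a,\delta)}(X-b)=\inf\{\delta,\ \bar{v}(a-b)\}\leq\delta$. Hence every element of $M_{\bar{w}}$ is $\leq\delta$, and since $\delta=\bar{w}(X-a)\in M_{\bar{w}}$, it is the maximum of $M_{\bar{w}}$.

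For the ``only if'' direction, let $\delta=\operatorname{Max}(M_{\bar{w}})$ and pick $a\in\bar{K}$ with $\bar{w}(X-a)=\delta$; note $\delta\in\g_{\bar{w}}$ automatically. By (\ref{eq:barw>wadelta}) we already have $\bar{w}(f)\geq w_{(a,\delta)}(f)$ for every $f\in\bar{K}[X]$, so it remains to prove the reverse inequality. Since $\bar{K}$ is algebraically closed, every $f\in\bar{K}[X]$ factors as $f=u\prod_i(X-c_i)$ with $u\in\bar{K}^{\times}$ and $c_i\in\bar{K}$; as both $\bar{w}$ and $w_{(a,\delta)}$ are valuations, it suffices to show $\bar{w}(X-c)=w_{(a,\delta)}(X-c)$ for every $c\in\bar{K}$ and then extend multiplicatively to $\bar{K}[X]$ and by quotients to $\bar{K}(X)$. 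Now $w_{(a,\delta)}(X-c)=\inf\{\delta,\ \bar{v}(a-c)\}$ from the expansion $X-c=(X-a)+(a-c)$, while the ultrametric inequality gives $\bar{w}(X-c)\geq\inf\{\delta,\ \bar{v}(a-c)\}$ and maximality of $\delta$ gives $\bar{w}(X-c)\leq\delta$. If $\bar{v}(a-c)<\delta$, then $\bar{w}(X-a)=\delta\neq\bar{v}(a-c)=\bar{w}(a-c)$, so the ultrametric inequality is an equality and $\bar{w}(X-c)=\bar{v}(a-c)=w_{(a,\delta)}(X-c)$. If $\bar{v}(a-c)\geq\delta$, then $w_{(a,\delta)}(X-c)=\delta$ and the two displayed bounds force $\bar{w}(X-c)=\delta$ as well. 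This yields equality on linear polynomials, hence on all of $\bar{K}(X)$.

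I do not expect a serious obstacle here; the only points needing a little care are the reduction to linear factors (which uses that $\bar{K}$ is algebraically closed and that $w_{(a,\delta)}$ is genuinely a valuation, as already asserted in Section \ref{basics}) and the small case split distinguishing $\bar{v}(a-c)<\delta$ from $\bar{v}(a-c)\geq\delta$ when comparing $\bar{w}(X-c)$ with $w_{(a,\delta)}(X-c)$.
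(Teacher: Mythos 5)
Your proof is correct, and the ``only if'' direction is genuinely different from the paper's. The paper works coordinate-by-coordinate in the Taylor expansion and splits into two cases according to whether $\delta\in\g_{\bar v}$: when $\delta\notin\g_{\bar v}$ it observes that, since $\g_{\bar v}=\g_v\otimes_{\Z}\Q$ is divisible, all the values $\bar v(a_j)+j\delta$ are pairwise distinct, forcing $\bar w(f)=w_{(a,\delta)}(f)$; when $\delta\in\g_{\bar v}$ it takes a minimal-degree monic counterexample $f$ to the strict inequality in (\ref{eq:barw>wadelta}) and derives a contradiction from the transcendence of $\bigl(\frac{X-a}{b}\bigr)^*$ over $k_{\bar v}$ via Lemma~\ref{rt_lem}. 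You instead exploit that $\bar K$ is algebraically closed and that both $\bar w$ and $w_{(a,\delta)}$ are valuations (hence multiplicative), so it suffices to prove the equality on monomials $X-c$; there the verification is an elementary one-line case split on whether $\bar v(a-c)<\delta$ or $\bar v(a-c)\geq\delta$, using maximality of $\delta$ in exactly one place. This avoids both the divisibility observation and the residue-transcendence machinery, at the price of explicitly invoking that $w_{(a,\delta)}$ is a valuation -- a fact the paper asserts when introducing $w_{(a,\delta)}$ in Section~\ref{basics}, so it is fair game. Your write-up has a small redundancy: you cite (\ref{eq:barw>wadelta}) as giving one inequality ``so it remains to prove the reverse,'' but then in fact reprove equality on linear factors from scratch and extend multiplicatively, so the appeal to (\ref{eq:barw>wadelta}) is not actually used. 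This does not affect correctness.
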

\begin{proof}
Suppose that $M_{\bar{w}}$ has a maximal element $\delta$.\\

Suppose first that $\delta\notin\g_{\bar{v}}$. Take an $f(X)\in \bar{K}[X]$. Write the Taylor expansion
$$
f(X)=a_n(X-a)^{n}+\dots+a_0.
$$
For $0\leq \ell_1<\ell_2\leq n$ we must have $\bar{v}(a_{\ell_1})+\ell_{1}\delta\neq \bar{v}(a_{\ell_2})+\ell_{2}\delta$. Hence
$\bar{w}(f(X))=w_{(a,\delta)}(f(X))$.\\

Now suppose that $\delta\in\g_{\bar{v}}$. Then we are in the case when $w$ is a residue-transcendental extension of $v$. We always have the inequality (\ref{eq:barw>wadelta}), so we only need to rule out the {\it strict} inequality in (\ref{eq:barw>wadelta}). Suppose there exists a polynomial $f(X)\in \bar{K}[X]$, such that
\begin{equation}
\bar{w}(f(X))>w_{(a,\delta)}(f(X)).\label{eq:strictinequality}
\end{equation}
Choose a monic polynomial $f$ of minimal degree satisfying the strict inequality (\ref{eq:strictinequality}). Write the Taylor expansion  $f(X)=a_n(X-a)^{n}+a_{n-1}(X-a)^{n-1}+\dots+a_0$ with $a_n=1$.\\

Write $f=\sum\limits_{j\in S_{(a,\delta)}(f)}a_j(X-a)^j+\sum\limits_{j\notin S_{(a,\delta)}(f)}a_j(X-a)^j$. We have\\
\begin{align*}
\bar{w}\left(\sum\limits_{j\in S_{(a,\delta)}(f)}a_j(X-a)^j\right)&\geq
\inf\left\{\bar{w}(f),\bar{w}\left(\sum\limits_{j\notin S_{(a,\delta)}(f)}a_j(X-a)^j\right)\right\}\\
>& w_{(a,\delta)}(f)=w_{(a,\delta)}\left(\sum\limits_{j\in S_{(a,\delta)}(f)}a_j(X-a)^j\right).
\end{align*}
Thus replacing $f(X)$ by $\sum\limits_{j\in S_{(a,\delta)}(f)}a_j(X-a)^j$ does not affect  the strict inequality (\ref{eq:strictinequality}). Hence we may assume that for all $j$, $0\leq j\leq n$, if $a_j\neq 0$ then $j\in S_{(a,\delta)}(f)$. We will make this assumption from now on. In particular, we have $w_{(a,\delta)}(f)=n\delta$.

Now by Lemma \ref{rt_lem} there exists $b$ such that $t=\left(\frac{X-a}{b}\right)^*$ is transcendental over $k_{\bar{v}}$.\\

Using the fact that $\delta=\bar{w}(X-a)=\bar{v}(b)$, we see that $w_{(a,\delta)}\left(\frac{f}{b^n}\right)=n\delta-n\delta=0$. Hence for each $j\in\{0,\dots,n\}$ we have $\bar{w}\left(\frac{a_j}{b^{n-j}}\frac{(X-a)^j}{b^j}\right)=0$, therefore
$\bar{v}\left(\frac{a_j}{b^{n-j}}\right)=0$. We also have
$\bar{w}\left(\frac{f}{b^n}\right)>w_{(a,\delta)}\left(\frac{f}{b^n}\right)=0$.\\

Consider the image of $\frac{f}{b^n}$ in $k_{\bar{w}}$. We have 
$\sum\limits_{j=0}^n\frac{a_j}{b^{n-j}}t^j=0$ with the coefficient of $t^n$ equal to $1$. This contradicts the fact that $t$ is transcendental over $k_{\bar{v}}$.\\

Conversely, suppose that $\bar{w}=w_{(a,\delta)}$ for $(a,\delta)\in \bar{K}\times \g_{\bar{w}}$, with $\bar{w}(X-a)=\delta$. Then for all $b\in \bar{K}$ we have $X-b=X-a+a-b$, hence $\bar{w}(X-b)=\inf \{\delta,\ \bar{v}(a-b)\}$. Therefore $\bar{w}(X-b)\leq \bar{w}(X-a)$.
\end{proof}

\begin{lemma}\label{minimal_pair}
Let $(a,\delta)$ be a pair of definition of $\bar{w}$. For each polynomial $f\in K[X]$ of degree $\deg f<\D(\bar{w})$, we have $w(f(X))=\bar{v}(f(a))$. 
\end{lemma}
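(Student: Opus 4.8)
The plan is to reduce to the case where $f$ is monic, factor $f$ over $\bar{K}$ into linear factors, and compute $w(f)$ and $\bar v(f(a))$ factor by factor using the pair of definition $(a,\delta)$. The two quantities differ only if some root of $f$ is ``$\delta$-close'' to $a$, and Lemma \ref{pairofdefinition} turns such a root into the first coordinate of a pair of definition of $\bar{w}$ of degree at most $\deg f$, contradicting $\deg f<\D(\bar w)$.

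First I would dispose of the trivial reductions. Since $\bar{w}$ restricts to $w$ on $K(X)$ we have $w(f)=\bar{w}(f)$, and the statement is clear when $f=0$ or $\deg f=0$. Writing $f=c\,g$ with $c\in K^{\times}$ the leading coefficient of $f$ and $g$ monic of the same degree, the equalities $w(f)=\bar v(f(a))$ and $w(g)=\bar v(g(a))$ are equivalent (both sides change by $v(c)$), so I may assume $f$ monic of degree $n$ with $1\le n<\D(\bar w)$.

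Next is the main computation. Because $(a,\delta)$ is a pair of definition, $\bar{w}=w_{(a,\delta)}$, and writing $X-b=(X-a)+(a-b)$ as its Taylor expansion around $a$ gives $\bar{w}(X-b)=w_{(a,\delta)}(X-b)=\inf\{\delta,\ \bar v(a-b)\}$ for every $b\in\bar{K}$; in particular $\bar{w}(X-b)\le\bar v(a-b)$, with equality precisely when $\bar v(a-b)\le\delta$. Factoring $f=\prod_{i=1}^{n}(X-b_i)$ with $b_i\in\bar{K}$ (each $b_i$ a root of $f\in K[X]$, hence algebraic over $K$ of degree $\le n$) and adding up,
\[
w(f)=\sum_{i=1}^{n}\bar{w}(X-b_i)=\sum_{i=1}^{n}\inf\{\delta,\ \bar v(a-b_i)\}\ \le\ \sum_{i=1}^{n}\bar v(a-b_i)=\bar v(f(a)),
\]
and, this being a termwise comparison in an ordered group, equality holds if and only if $\bar v(a-b_i)\le\delta$ for every $i$.

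It remains to rule out the alternative. Suppose $\bar v(a-b)>\delta$ for $b:=b_i$; then $b$ and $\delta'=\delta$ satisfy the hypotheses of Lemma \ref{pairofdefinition} with $\mu=\bar{w}$, so $(b,\delta)$ is again a pair of definition of $\bar{w}$. Hence $\D(\bar w)\le[K(b):K]\le\deg f<\D(\bar w)$, a contradiction, and therefore $w(f)=\bar v(f(a))$. I do not expect a genuine obstacle: the argument is short, and the only points needing care are the preliminary reductions and reading off $\bar{w}(X-b)=\inf\{\delta,\bar v(a-b)\}$ directly from the definition of $w_{(a,\delta)}$. The whole force of the argument sits in Lemma \ref{pairofdefinition}, which says that a root of $f$ lying $\delta$-close to $a$ is precisely a root defining the same valuation, and it does so with degree $\le\deg f$.
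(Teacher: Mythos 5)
Your proof is correct and follows essentially the same route as the paper's: factor $f$ over $\bar K$, compute $\bar w(X-b_i)=\inf\{\delta,\bar v(a-b_i)\}$ for each root, and invoke Lemma \ref{pairofdefinition} to rule out roots $b_i$ that are too close to $a$, since that would produce a pair of definition of degree $<\D(\bar w)$. The only cosmetic differences are that you spell out the reduction to monic $f$ and allow $\bar v(a-b_i)=\delta$ (ruling out only strict inequality), whereas the paper rules out $\bar v(a-b_i)\ge\delta$ outright; both give $\inf\{\delta,\bar v(a-b_i)\}=\bar v(a-b_i)$.
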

\begin{proof}
Take a polynomial $f\in K[X]$ of degree $r<\D(\bar{w})$.
Let $b_1\dots,b_r$ be the roots of $f$ (the $b_i$ need not be distinct).\\
For each $t$, $1\leq t\leq r$, we have $\bar{v}(a-b_t)<\delta$. Indeed, if there existed $t$ such that $\bar{v}(a-b_t)\ge\delta$, by Lemma \ref{pairofdefinition} $(b_t,\delta)$ would be a pair of definition of $\bar{w}$, with $[K(b_t):K]<\D(\bar{w})$. This contradicts the definition of $\D(\bar{w})$.\\
Therefore, for each  $t$, $1\leq t\leq r$, we have $\bar{w}(X-b_t)=\inf\{\delta, \bar{v}(a-b_t)\}=\bar{v}(a-b_t)$.\\

Now write $f(X)=c\prod\limits_{t=1}^r (X-b_t)$.\\

We have $w(f(X))=\bar{w}(f(X))=\bar{v}(c)\sum\limits_{t=1}^r \bar{w}(X-b_t)=\bar{v}(c)\sum\limits_{t=1}^r \bar{v}(a-b_t)=\bar{v}(f(a))$.

\end{proof}

From Proposition \ref{classification} and Proposition \ref{max_ele} we deduce the following Corollary.\\

\begin{corollary} The following conditions are equivalent:

(1) $w$ is valuation-transcendental extension of $v$ to $K(X)$

(2) for every algebraic closure $\bar{K}$ of $K$ and every extension $\bar{v}$ to $\bar{K}$ and a common extension of $w$ and
$\bar{v}$ to $\bar{K}(X)$, there exists $(a,\delta)\in\bar{K}\times\g_{\bar{w}}$ such that $\bar{w}=w_{(a,\delta)}$

(3) there exists an algebraic closure $\bar{K}$ of $K$, an extension $\bar{v}$ to $\bar{K}$ and a common extension of $w$ and
$\bar{v}$ to $\bar{K}(X)$ such that $\bar{w}=w_{(a,\delta)}$ for a certain $(a,\delta)\in\bar{K}\times \g_{\bar{w}}$.

\end{corollary}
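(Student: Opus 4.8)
\section*{Proof proposal}

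The plan is to prove the cycle of implications $(1)\Rightarrow(2)\Rightarrow(3)\Rightarrow(1)$. Two of the three steps are immediate from the material already assembled in Sections \ref{basics} and \ref{minimalpairs}, and the third needs only one additional observation, which is where the (very mild) difficulty lies.

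For $(1)\Rightarrow(2)$, I would fix an arbitrary algebraic closure $\bar K$, an arbitrary extension $\bar v$ of $v$ to $\bar K$, and an arbitrary common extension $\bar w$ of $w$ and $\bar v$ to $\bar K(X)$. Since $w$ is valuation-transcendental it is either residue-transcendental or value-transcendental, so parts (a) and (b) of Proposition \ref{classification} --- both already phrased directly in terms of $w$ --- guarantee that $M_{\bar w}$ has a maximal element $\delta$. Proposition \ref{max_ele} then furnishes $a\in\bar K$ with $\bar w=w_{(a,\delta)}$ and $\bar w(X-a)=\delta\in\g_{\bar w}$. As the triple $(\bar K,\bar v,\bar w)$ was arbitrary, (2) follows. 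The implication $(2)\Rightarrow(3)$ requires nothing but the existence of at least one common extension of $w$ and $\bar v$ to $\bar K(X)$, which is Proposition 2.1 of \cite{APZ} recalled at the start of Section \ref{basics}; instantiating (2) at any such triple yields (3).

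For $(3)\Rightarrow(1)$, suppose $\bar w=w_{(a,\delta)}$ for some $(a,\delta)\in\bar K\times\g_{\bar w}$. The trivial Taylor expansion of $X-a$ about $a$ gives $\bar w(X-a)=\delta$, so the hypothesis of Proposition \ref{max_ele} is met and $M_{\bar w}$ has a maximal element; by Proposition \ref{classification}(c), $w$ is therefore not valuation-algebraic. It remains to conclude that $w$ is then valuation-transcendental, and this is the one point that deserves care: I would invoke the Abhyankar-type inequality --- rational rank of $\g_w/\g_v$ plus transcendence degree of $k_w$ over $k_v$ is at most $1$, valid because $X$ is transcendental over $K$ --- so that the negation of ``valuation-algebraic'' (namely, $\g_w/\g_v$ not torsion or $k_w/k_v$ not algebraic) forces either rational rank exactly $1$ or transcendence degree exactly $1$, which is precisely ``valuation-transcendental''. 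Alternatively, one can bypass Abhyankar's inequality entirely by a direct case split on the maximal element $\delta$ of $M_{\bar w}$: if $\delta\notin\g_{\bar v}$ then Lemma \ref{vt_lem} gives $\g_{\bar v}\subsetneqq\g_{\bar w}$ and Proposition \ref{classification}(b) applies, whereas if $\delta\in\g_{\bar v}$ then Lemma \ref{vt_lem} forces $\g_{\bar v}=\g_{\bar w}$, and Lemma \ref{rt_lem} (applied to an $a$ realizing the maximum) produces an element of $k_{\bar w}$ transcendental over $k_{\bar v}$, whence Proposition \ref{classification}(a). Either way $w$ is valuation-transcendental, closing the cycle.

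The main obstacle, such as it is, is thus purely bookkeeping in $(3)\Rightarrow(1)$: one must be sure that ``not valuation-algebraic'' coincides with ``valuation-transcendental,'' i.e.\ that the trichotomy of Definition \ref{valuation-} is exhaustive for transcendental extensions. Everything else in the argument is a direct citation of Propositions \ref{classification} and \ref{max_ele}.
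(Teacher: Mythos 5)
Your proof is correct and follows essentially the same route as the paper, which simply deduces the corollary from Propositions \ref{classification} and \ref{max_ele} without spelling out the details. One small remark on the point you flag as delicate in $(3)\Rightarrow(1)$: the biconditionals in Proposition \ref{classification}(a) and (b) already settle the dichotomy directly (once $M_{\bar w}$ has a maximum, either $\g_{\bar v}=\g_{\bar w}$, giving residue-transcendental by (a), or $\g_{\bar v}\subsetneqq\g_{\bar w}$, giving value-transcendental by (b)), so neither Abhyankar's inequality nor a re-invocation of Lemmas \ref{rt_lem} and \ref{vt_lem} is actually needed.
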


\section{Key Polynomials}\label{keypolynomials}
In this section, we introduce the notion of key polynomials of a valuation and study some of their basic properties.\\

\begin{enumerate}
\item For each strictly positive integer $b$, we write $\partial_{b}:=\frac{\partial^{b}}{b!\partial X^{b}}$, the $b${\bf-th formal derivative} with respect to $X$. 
\item For each polynomial $f\in K[X]$, let
$\epsilon(f):=\max\limits _{b\in\mathbb{N}^{\ast}}\left\{\frac{w(f)-w(\partial_{b}P)}{b}\right\}$.
\end{enumerate}
\begin{definition}
Let $Q$ be a monic polynomial in $K[X]$. We say that $Q$ is a {\bf key polynomial} for $w$ if for each polynomial $f$ satisfying
$$
\epsilon(f)\geq\epsilon(Q),
$$
we have $\deg(f)\geq\deg(Q)$.
\end{definition}

For a monic polynomial $Q\in K[X]$, every polynomial $f\in K[X]$ can be written in a unique way as 
\begin{equation}
f=\sum\limits _{j=0}^{s}f_{j}Q^{j},\label{eq:Qexpansion}
\end{equation}
with all the $f_{j}\in K[X]$ of degree strictly less than $\deg(Q)$. We call (\ref{eq:Qexpansion}) the $Q${\bf-expansion}
of $f$. 

\begin{definition}
Let $g=\sum\limits _{j=0}^{s}g_{j}Q^{j}$ be the $Q$-expansion of an element $g\in K[X]\setminus\{0\}$. We put
$$
w_{Q}(g):=\min\limits _{\underset{g_{j}\neq0}{0\leq j\leq s}}w\left(g_{j}Q^{j}\right).
$$
We adopt the convention that $w_Q(0)=\infty$. The mapping $w_{Q}:K[X]\rightarrow\g_w$ is called the {\bf truncation} of
$w$ with respect to $Q$. 
\end{definition}

We have the following Proposition (\cite{J1}, Proposition 2.4 (ii) and Proposition 2.6).

\begin{proposition}
If $Q$ is a key polynomial then $Q$ is irreducible and $w_Q$ is a valuation.\\
\end{proposition}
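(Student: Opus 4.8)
The plan is to base both assertions on the following description of the invariant $\epsilon$: for every nonzero $f\in K[X]$ and every common extension $\bar w$ of $\bar v$ and $w$ to $\bar K(X)$,
\[
\epsilon(f)=\max\{\bar w(X-a)\ /\ a\in\Ro(f)\}.
\]
To prove this, factor $f=c\prod_{i=1}^{n}(X-a_i)$ over $\bar K$ and recall that the $b$-th Hasse derivative satisfies $\partial_b f=c\sum_{|S|=b}\prod_{i\notin S}(X-a_i)$, the sum running over the $b$-element subsets $S$ of $\{1,\dots,n\}$. Since $\partial_b f\in K[X]$ we have $w(\partial_b f)=\bar w(\partial_b f)\ge\bar w(c)+\min_{|S|=b}\sum_{i\notin S}\bar w(X-a_i)$, whence $\frac{w(f)-w(\partial_b f)}{b}\le\frac1b\bigl(\text{sum of the $b$ largest among the }\bar w(X-a_i)\bigr)\le\max_i\bar w(X-a_i)$ for every $b\ge1$; this gives the inequality ``$\le$''. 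For ``$\ge$'', set $\delta=\max_i\bar w(X-a_i)$ and let $b^{*}$ be the number of indices with $\bar w(X-a_i)=\delta$. Among the subsets $S$ with $|S|=b^{*}$, the quantity $\sum_{i\notin S}\bar w(X-a_i)$ is minimized by one and only one $S$, namely the set of indices where the maximum is attained, so no cancellation occurs and $w(\partial_{b^{*}}f)=w(f)-b^{*}\delta$, giving $\epsilon(f)\ge\delta$.

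Granting this formula, the irreducibility of $Q$ is immediate: if $Q=Q_1Q_2$ with $1\le\deg Q_i<\deg Q$, then $\Ro(Q)=\Ro(Q_1)\cup\Ro(Q_2)$ forces $\epsilon(Q)=\max\{\epsilon(Q_1),\epsilon(Q_2)\}$, so one of the two factors has $\epsilon$ at least $\epsilon(Q)$ and yet smaller degree than $Q$, contradicting the definition of a key polynomial. For the statement that $w_Q$ is a valuation, the inequality $w_Q(f+g)\ge\min\{w_Q(f),w_Q(g)\}$ is immediate from the definition, because the $Q$-expansion of $f+g$ is obtained termwise from those of $f$ and $g$. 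The content is multiplicativity, $w_Q(fg)=w_Q(f)+w_Q(g)$, which I would phrase as the statement that $\gr_{w_Q}K[X]$ is a domain, equivalently that $\ini_{w_Q}(f)\,\ini_{w_Q}(g)\ne0$ for all nonzero $f,g$. Writing $Q$-expansions $f=\sum_i f_iQ^i$, $g=\sum_j g_jQ^j$ and re-expanding each product $f_ig_j$, which has degree $<2\deg Q$, as $f_ig_j=d_{ij}Q+e_{ij}$ with $\deg d_{ij},\deg e_{ij}<\deg Q$, one reduces the whole question to controlling the ``carries'' $d_{ij}$; and since $w_Q$ coincides with $w$ on polynomials of degree $<\deg Q$, this in turn reduces to showing that $\ini_{w_Q}(Q)$ is transcendental over the subring $G_0$ of $\gr_{w_Q}K[X]$ generated by the initial forms of the polynomials of degree $<\deg Q$ (or, in the case where it is algebraic over $G_0$, that the minimal relation it satisfies is prime), the ring $G_0$ being a domain by induction on the key polynomials of smaller degree.

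The step I expect to be the main obstacle is this last one: turning the combinatorial minimality built into the definition of a key polynomial---that $\deg Q$ is minimal among degrees of polynomials $f$ with $\epsilon(f)\ge\epsilon(Q)$---into the ring-theoretic statement that $\ini_{w_Q}(Q)$ is not a zero divisor in $\gr_{w_Q}K[X]$. Concretely, one must show that a nontrivial relation on $\ini_{w_Q}(Q)$ of degree below the critical one would yield a polynomial of degree $<\deg Q$ whose $w_Q$-value is strictly less than its $w$-value, and that such a polynomial necessarily has $\epsilon\ge\epsilon(Q)$; equivalently, one must track the $Q$-adic carries in a product of $Q$-expansions and rule out any drop in the value of the leading term. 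Once this is in place, the multiplicativity of $w_Q$ and the non-vanishing of the product of initial forms are formal, and $w_Q$ is a valuation. All of this is carried out in detail in \cite{J1} (Propositions 2.4 and 2.6).
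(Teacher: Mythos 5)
The paper gives no proof of this proposition: it is stated with a citation to \cite{J1}, Propositions 2.4(ii) and 2.6, and nothing more. So there is no in-text argument to compare your proposal against, and the relevant question is whether your proposal stands on its own and is consistent with what the paper assumes.

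Your proof of irreducibility is correct and essentially complete. The Hasse-derivative identity $\partial_b f=c\sum_{|S|=b}\prod_{i\notin S}(X-a_i)$ follows from comparing coefficients of $h^b$ in $\prod_i\bigl((X-a_i)+h\bigr)$, the upper bound $\frac{w(f)-w(\partial_b f)}{b}\le\delta$ is as you say, and at $b=b^{\ast}$ the minimizing subset $S$ is unique (any other $S'$ of the same size with equal sum would force all its indices into the set where the maximum is attained, hence $S'=S$), so there is no cancellation and $\epsilon(f)=\delta$. This identity is exactly the proposition the paper quotes next from \cite{J2} (that $\epsilon(f)=\delta(\bar w,f)$); you reprove it rather than assume it, which is fine and arguably cleaner, and then the factorization argument $\Ro(Q)=\Ro(Q_1)\cup\Ro(Q_2)\Rightarrow\epsilon(Q)=\max\{\epsilon(Q_1),\epsilon(Q_2)\}$ immediately contradicts minimality of $\deg Q$.

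For the assertion that $w_Q$ is a valuation, your remarks are accurate as far as they go: termwise addition of $Q$-expansions gives the ultrametric inequality at once, multiplicativity is the only content, and the obstacle is controlling the carries $f_ig_j=d_{ij}Q+e_{ij}$, equivalently showing $\ini_{w_Q}(Q)$ is not a zero divisor in $\gr_{w_Q}K[X]$. But you do not actually discharge this step; you identify it, sketch the shape a proof should have, and then defer to \cite{J1}. Since the paper itself does exactly the same, this is an acceptable reading of the paper's intent, but be clear-eyed that the second half of your write-up is an annotated citation rather than a proof.
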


For each polynomial $f(X)\in K[X]$, let $\delta(\bar{w},f)=max \{\bar{w}(X-a)/\ a\in \bar{K},\ f(a)=0\}$.\\

\begin{proposition}(\cite{J2}, Proposition 3.1)
Let $f(X)\in K[X]$ be a monic polynomial. We have $\delta(\bar{w}, f)=\epsilon(f)$.
\end{proposition}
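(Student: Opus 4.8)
The plan is to establish the two inequalities $\epsilon(f)\le\delta(\bar w,f)$ and $\epsilon(f)\ge\delta(\bar w,f)$ separately, in both cases working over $\bar K$, where $f$ splits as $f=\prod_{i=1}^{n}(X-a_i)$ with $a_1,\dots,a_n\in\bar K$ repeated according to multiplicity (we may of course assume $n=\deg f\ge 1$). The one algebraic ingredient is the Leibniz-type formula for the operators $\partial_b$ applied to a product of linear factors, $\partial_b f=\sum_{|S|=b}\prod_{i\notin S}(X-a_i)$, where $S$ ranges over $b$-element subsets of $\{1,\dots,n\}$ (so $\partial_b f=0$ for $b>n$). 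The valuation-theoretic inputs are that $\bar w$ restricts to $w$ on $K[X]$, so $w(\partial_b f)=\bar w(\partial_b f)$, and that $w(f)=\bar w(f)=\sum_{i=1}^{n}\bar w(X-a_i)$.

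For $\epsilon(f)\le\delta(\bar w,f)$ I fix $b$ with $\partial_b f\neq 0$ and apply the ultrametric inequality to the Leibniz expansion: $w(\partial_b f)\ge\min_{|S|=b}\sum_{i\notin S}\bar w(X-a_i)$, a minimum realized by choosing $S$ to consist of the $b$ indices with the largest values $\bar w(X-a_i)$. Hence $w(f)-w(\partial_b f)$ is at most the sum of the $b$ largest of the $\bar w(X-a_i)$, which is at most $b\,\delta(\bar w,f)$; dividing by $b$ and taking the maximum over $b$ yields the inequality. (Indices $b$ with $\partial_b f=0$, possible in positive characteristic since $\binom{n}{b}$ may vanish, contribute $-\infty$ and can be ignored.)

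For the reverse inequality it suffices to exhibit one value of $b$ at which equality holds. I pick a root $a$ with $\bar w(X-a)=\delta:=\delta(\bar w,f)$, put $I:=\{\,i : \bar v(a-a_i)\ge\delta\,\}$ and $m:=|I|\ge 1$. Writing $X-a_i=(X-a)+(a-a_i)$ and using that $\delta$ is the largest of the $\bar w(X-a_j)$ over the roots, one checks that $\bar w(X-a_i)=\delta$ when $i\in I$ and $\bar w(X-a_i)=\bar v(a-a_i)<\delta$ when $i\notin I$. In the Leibniz expansion of $\partial_m f$ the single term indexed by $S=I$ then has strictly smaller valuation than every other term, so the ultrametric inequality becomes an equality: $w(\partial_m f)=\sum_{i\notin I}\bar v(a-a_i)$. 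Since $w(f)=m\delta+\sum_{i\notin I}\bar v(a-a_i)$, subtracting gives $w(f)-w(\partial_m f)=m\delta$ and therefore $\epsilon(f)\ge\frac{w(f)-w(\partial_m f)}{m}=\delta$.

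The point requiring genuine care — and the crux of the whole argument — is the strict minimality of the $S=I$ term, as this is exactly what upgrades ``$\ge$'' to ``$=$''. It comes down to a short counting argument: any $m$-element $S\neq I$ must, to keep its cardinality equal to $|I|$, drop at least one index of $I$ (of value $\delta$) in exchange for an index outside $I$ (of value strictly below $\delta$), which strictly decreases $\sum_{i\in S}\bar w(X-a_i)$, hence strictly increases $\sum_{i\notin S}\bar w(X-a_i)$. The degenerate subcases ($I=\{1,\dots,n\}$, where $\partial_m f=1$, or $\deg f=0$) are immediate and should just be noted in passing.
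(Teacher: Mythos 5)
The paper does not prove this statement; it cites it as Proposition 3.1 of \cite{J2}, so there is no internal proof to compare against. Your argument is nevertheless correct and self-contained, and it is essentially the standard one for this result: factor $f$ over $\bar K$, apply the Hasse-derivative Leibniz identity $\partial_b f=\sum_{|S|=b}\prod_{i\notin S}(X-a_i)$, get the upper bound $\epsilon(f)\le\delta(\bar w,f)$ from the ultrametric inequality together with $\bar w(X-a_i)\le\delta$, and get the lower bound by exhibiting $b=m=|I|$ where equality holds. The one point deserving emphasis --- and you rightly flag it --- is the strict dominance of the $S=I$ term: because $|I\setminus S|=|S\setminus I|\ge 1$ for any $m$-set $S\ne I$, and every index in $I\setminus S$ contributes $\delta$ while every index in $S\setminus I$ contributes strictly less, the $S=I$ term has strictly smaller $\bar w$-value than every other term, so $\bar w(\partial_m f)=\sum_{i\notin I}\bar v(a-a_i)$ exactly (and in particular $\partial_m f\ne0$, which handles the positive-characteristic worry automatically). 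The degenerate cases $I=\{1,\dots,n\}$ (where $\partial_n f=1$ and $w(f)=n\delta$) and $n=0$ are immediate as you say. No gaps.
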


The quantity $\delta(\bar{w}, f)$ depends only on $w$ and $f$, but not on $\bar{w}$, therefore we will denote it by $\delta(f)$.\\
\begin{definition} Let $\Lambda$ be an ordered set. A set $\{Q_i\}_{i\in\Lambda}$ of key polynomials is said to be {\bf complete} for $w$ if for every $f\in K[X]$ there exists $i\in\Lambda$ such that $w_{Q_i}(f)=w(f)$.
\end{definition}

\begin{remark}\label{prop_key_pol} By \cite{J1}, Theorem 1.1 and its proof, there is a complete set of key polynomials
$\{Q_i\}_{i\in\Lambda}$ for the valuation $w$, having the following properties:

\begin{enumerate}
\item $\Lambda=\bigcup\limits_{j\in I}\Lambda_j$ , with $I=\{0,\dots,N\}$ or $\N$, and for each $j$ we have
$\Lambda_j=\{j\}\cup\vartheta_j$, where $\vartheta_j$ an ordered set without a maximal element, which may be empty.
\item There exists $a\in K$ such that $Q_0=X-a$.
\item For all $j\in I\setminus\{0\}$ we have $j-1<\vartheta_{j-1}<j$.
\item All the polynomials $Q_i$ with $i\in \Lambda_j$ have the same degree, and the polynomials $Q_i$ with $i\in\Lambda_j$ have degree strictly less than the polynomials $Q_{i'}$, $i'\in \Lambda_{j+1}$.
\item For each $i<i'\in\Lambda$ we have $w(Q_i)<w(Q_i')$ and $\epsilon(Q_i)<\epsilon(Q_i')$.
\end{enumerate}
\end{remark}
\medskip

For each $i\in \Lambda$, put $w_{i}=w_{Q_i}$, $\beta_i=w(Q_i)$ and $\epsilon_i=\epsilon(Q_i)$.\\

Even though the set of key polynomials $\{Q_i\}_{i\in \Lambda}$ is not unique, the cardinality of $I$ and the degrees $d_j$ of the key polynomials $Q_j$ for each $j\in I$ are uniquely determined by $w$. As well, the valuations $w_j$ for $j\in I$ are uniquely determined by $w$.
\medskip

\begin{remark}\label{uniqueness} The above uniqueness statements together with conditions 1 and 3 above imply, in particular, that if $i\in\Lambda$ is not the maximal element then the set $\{Q_j\}_{\overset{j\in\Lambda}{j\le i}}$ of key polynomials is not complete (equivalently, $w_i\ne w$). 
\end{remark}

\noi{\bf Notation.} We will by denote by $d(w)$ the degree of $Q_N$ for the maximal element $N$ of $I$ if it exists. If $I=\mathbb N$, we put $d(w)=\infty$.\\

\begin{remark}\label{directconsequence}
A direct consequence of the construction of \cite{J1} is that for all $i\in\Lambda_j$ the value group $\g_i$ of $w_i$ is equal to
$\g_v+\beta_0\Z+\dots+\beta_j\Z$.\\
\end{remark}

Also by construction, $\Lambda$ has a maximal element if and only if the following two conditions hold:
\begin{enumerate}
\item the set $I=\{0,\dots,N\}$ is finite and
\item $\Lambda_n=\{N\}$.
\end{enumerate}

Keep the above notation. Take an element $i\in\Lambda$.

\begin{definition} We say that $Q_i$ is a {\bf limit key polynomial} if the following two conditions hold:
\begin{enumerate}
\item $i\in I\setminus\{0\}$
\item $\vartheta_{i-1}\ne\emptyset$.
\end{enumerate}
\end{definition}

\begin{proposition}\label{kp_form}
There exists a complete set $\{Q_i\}_{i\in \Lambda}$ of key polynomials having the following additional property. For every $j\in I$ such that $\Lambda_{j+1}\ne\emptyset$ and $\vartheta_j=\emptyset$ (in other words, $Q_{j+1}$ is not a limit key polynomial), we can write $Q_{j+1}=q_nQ_j^{n}+\dots+q_tQ_j^t+\dots+q_0$, with $q_n=1$, $\deg q_t<d_j$ and $w(q_t)+t\beta_j=n\beta_j$ for each $t$, $0\leq t\leq n-1$.\\
\end{proposition}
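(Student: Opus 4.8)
The plan is to start from the complete set of key polynomials $\{Q_i\}_{i\in\Lambda}$ provided by Remark \ref{prop_key_pol} and modify only the non-limit key polynomials $Q_{j+1}$ (those with $\vartheta_j=\emptyset$) so as to put them in the desired normal form, while checking that the modification does not disturb any of the properties in Remark \ref{prop_key_pol}. So fix $j\in I$ with $\Lambda_{j+1}\neq\emptyset$ and $\vartheta_j=\emptyset$; then $\Lambda_j=\{j\}$, so $w_j=w_{Q_j}$ is the truncation with respect to $Q_j$ and, by Remark \ref{directconsequence}, $\g_j=\g_v+\beta_0\Z+\dots+\beta_j\Z$. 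Write the $Q_j$-expansion $Q_{j+1}=\sum_{t=0}^{n}q_tQ_j^{t}$ with $q_n=1$ and $\deg q_t<d_j$. The first step is to show that $n=\deg Q_{j+1}/d_j$ is exactly $\min\{e\in\N^\ast\,:\,e\beta_j\in\g_v+\beta_0\Z+\dots+\beta_{j-1}\Z\}$, i.e. the ramification index of $Q_j$; this is standard in the Mac Lane–Vaqui\'e / J1 framework and expresses the fact that the first genuinely new key polynomial after $Q_j$ of strictly larger degree is a ``$\beta_j$-minimal'' lift of the relation $n\beta_j\in\g_{j-1}$.

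The second and main step is the normalization. Consider $w_j(Q_{j+1})=\min_{q_t\neq0}\{w(q_t)+t\beta_j\}$ and let $S$ be the set of indices $t$ realizing this minimum; since the values $w(q_t)+t\beta_j$ for distinct $t$ lie in distinct cosets modulo $\g_{j-1}$ unless $t\equiv t'\pmod n$, and all $t$ here lie in $\{0,\dots,n\}$, the only possible coincidence is between $t=0$ and $t=n$. I would argue that in fact $0,n\in S$ and that $w_j(Q_{j+1})=n\beta_j$: if not, one can replace $Q_{j+1}$ by a monic polynomial of the same degree with strictly larger $\epsilon$ (or adjust it to kill the lower-order terms not attaining the minimum), contradicting minimality of $\epsilon(Q_{j+1})$ among polynomials of degree $\deg Q_{j+1}$, or contradicting that $Q_{j+1}$ is a key polynomial. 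More precisely, the terms $q_tQ_j^t$ with $t\notin S$ satisfy $w(q_tQ_j^t)>w_j(Q_{j+1})$, hence $w(\sum_{t\notin S}q_tQ_j^t)>w_j(Q_{j+1})$; subtracting them off changes $Q_{j+1}$ by a polynomial of value strictly larger than $w_j(Q_{j+1})$ and of degree $<\deg Q_{j+1}$ (note $t=n$ must stay since $q_n=1$), and one checks using $w_{j+1}=w_{Q_{j+1}}$ and Remark \ref{uniqueness} that this does not change $w_j$, keeps the polynomial a key polynomial of the same degree, and preserves $\beta_{j+1}=w(Q_{j+1})$ and $\epsilon_{j+1}$. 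After this replacement only indices in $S$ survive, so $q_t\neq0\Rightarrow w(q_t)+t\beta_j=w_j(Q_{j+1})$; and since $q_n=1$ forces $w_j(Q_{j+1})\le n\beta_j$, while $w_j(Q_{j+1})=w(Q_{j+1})\ge$ the value of the $t=0$ term $=w(q_0)$, minimality together with the coset argument gives $w_j(Q_{j+1})=n\beta_j$ and hence $w(q_t)+t\beta_j=n\beta_j$ for every $t$ with $q_t\neq0$, which is exactly the claim (with the convention $w(q_t)=\infty$ when $q_t=0$).

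The third step is bookkeeping: verify that replacing $Q_{j+1}$ by this normalized polynomial for every non-limit index, simultaneously for all $j\in I$, yields a set $\{Q_i\}_{i\in\Lambda}$ that is still complete and still satisfies properties (1)–(5) of Remark \ref{prop_key_pol}. Completeness and the equalities of the $w_j$'s follow because the truncations $w_{Q_j}$ are unchanged (the modification of $Q_{j+1}$ affects neither $Q_j$ nor, by the value estimate above, $w_{Q_j}$ itself, and $Q_{j+2}$ and beyond are re-chosen accordingly within the J1 construction); the degree conditions (4) are preserved since we never change degrees; and (5) is preserved since $\beta_i$ and $\epsilon_i$ are unchanged. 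I expect the main obstacle to be precisely the delicate point in step two: showing that one genuinely may discard the non-minimal terms of the $Q_j$-expansion and deduce $w_j(Q_{j+1})=n\beta_j$ without breaking the ``key polynomial'' property or the value/residue data — this is where one has to invoke the minimality of $\epsilon(Q_{j+1})$ and the uniqueness statements recalled after Remark \ref{prop_key_pol} carefully, rather than just manipulate Taylor expansions.
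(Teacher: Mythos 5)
The paper does not prove this proposition; it only points the reader to Mac\,Lane's Theorem~9.4 in \cite{ML2} and Vaqui\'e's Theorem~1.11 in \cite{V}, both of which carry out exactly the normalization you describe: take the $Q_j$-expansion of the next key polynomial, discard the terms whose $w_j$-value is not minimal, and verify that nothing essential is lost. Your overall strategy is therefore the right one and in the same spirit as the cited proofs.

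That said, there are genuine gaps in your plan. First, your step~1 is false: $n=\deg Q_{j+1}/d_j$ is \emph{not} in general $\min\{e\in\N^\ast : e\beta_j\in\g_{j-1}\}$. That minimum is only the ramification index $e$; the degree jump also absorbs a residue-degree factor $f$, so $n=ef$ with $f>1$ possible (purely unramified degree jumps do occur). Consequently your coset argument in step~2 --- that among $t\in\{0,\dots,n\}$ the values $w(q_t)+t\beta_j$ can coincide only for $t=0$ and $t=n$ --- is wrong: any $t\equiv t'\ (\mathrm{mod}\ e)$ can coincide, and when $f>1$ there are typically many surviving indices (if only $t=0,n$ survived, the cleaned polynomial would be $Q_j^n+q_0$, which need not even be irreducible). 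Second, the pivotal assertion that $n\in S$, i.e.\ $w_j(Q_{j+1})=n\beta_j$, is only asserted, not proved: you offer two alternative contradictions without committing to either, and the first one (``minimality of $\epsilon(Q_{j+1})$ among polynomials of degree $\deg Q_{j+1}$'') is not a property that key polynomials enjoy --- the defining property goes the other way, namely $Q_{j+1}$ has minimal degree among polynomials with $\epsilon\ge\epsilon(Q_{j+1})$. Proving $w_j(Q_{j+1})=n\beta_j$ is precisely where one must use that $Q_{j+1}$ is a key polynomial (via the shape of its initial form in $\gr_{w_j}$, or via Proposition~2.10 of \cite{J1}), and that is the real content of the Mac\,Lane/Vaqui\'e theorems. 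Third, the step~3 bookkeeping (``one checks \dots keeps the polynomial a key polynomial of the same degree, and preserves $\beta_{j+1}$ and $\epsilon_{j+1}$'') is not automatic: the discarded part $g=\sum_{t\notin S}q_tQ_j^t$ satisfies $w_j(g)>w_j(Q_{j+1})$, but since $w(Q_{j+1})>w_j(Q_{j+1})$ strictly, this does not by itself give $w(g)>w(Q_{j+1})$, so one must argue separately (using $w_j$-equivalence of $Q_{j+1}$ and $Q_{j+1}-g$, and that $w_j$-equivalent monic polynomials of the same degree define the same truncation) that the replacement leaves $w_{j+1}$ and the key-polynomial property intact.
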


For a proof of Proposition \ref{kp_form}, one can imitate the proof of Theorem 9.4 \cite{ML2} or Theorem 1.11 \cite{V}.
\medskip

\noi{\bf In the sequel, we will always choose a complete set $\{Q_i\}_{i\in \Lambda}$ of key polynomials satisfying the conclusion of Proposition \ref{kp_form}.}

\section{Minimal Pairs and Key Polynomials}\label{kp_mp}
Let $\{Q_i\}_{i\in\Lambda}$ be a complete set of key polynomials for $w$.\\

In this section, we study the relation between the properties of $\{Q_i\}_{i\in\Lambda}$, minimal pairs for a common extension
$\bar{w}$, and the type of $w$ as residue-transcendental, value-transcendental or valuation-algebraic.

\begin{proposition}\label{non_max}
Take an element $i\in\Lambda$. If $i$ is not the maximal element of $\Lambda$ then
\begin{equation}\label{eq:grouprational}
\g_i\subset \g_v\otimes_\Z \Q.
\end{equation}
\end{proposition}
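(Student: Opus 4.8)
The claim is that if $i\in\Lambda$ is not maximal then $\g_i\subseteq\g_v\otimes_\Z\Q$; equivalently, the quotient $\g_i/\g_v$ is a torsion group. By Remark \ref{directconsequence} we have $\g_i=\g_v+\beta_0\Z+\dots+\beta_j\Z$ for $i\in\Lambda_j$, so it suffices to show that each $\beta_\ell$ ($0\le\ell\le j$) has a nonzero integer multiple lying in $\g_v$. The plan is to argue by induction on $\ell$, at each step exploiting the key polynomial relation of Proposition \ref{kp_form}, which is available precisely because $i$ is not maximal.

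First I would record the structural input: since $i$ is not the maximal element of $\Lambda$, neither is $j$ the ``terminal'' index, so $\Lambda_{j+1}\ne\emptyset$, and moreover the set of key polynomials through index $i$ is not complete (Remark \ref{uniqueness}), i.e. $w_i\ne w$. The base case $\ell=0$: $\beta_0=w(Q_0)=w(X-a)$ for some $a\in K$, which already lies in $\g_v$ (it is the value of a polynomial over $K$ at a point, hence $w(X-a)\in\g_w$; but more to the point, since $Q_0=X-a$ with $a\in K$, its $w_0$-value equals its $w$-value and... ) — here I should be slightly careful: $\beta_0$ need not lie in $\g_v$ a priori, but $\g_0=\g_v+\beta_0\Z$ and we want $\g_0\subseteq\g_v\otimes\Q$, so what is really needed is that $\beta_0$ is torsion mod $\g_v$. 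If $\Lambda_1\ne\emptyset$, then there is a next key polynomial $Q_1$ past $Q_0$'s block; if $Q_1$ is not a limit key polynomial we can apply Proposition \ref{kp_form} directly, and if it is, we instead use that $\vartheta_0\ne\emptyset$ and the valuations $w_{Q_i}$ for $i\in\vartheta_0$ all have value group $\g_0$ and are not complete.

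The inductive engine: suppose $\beta_0\do\beta_{\ell-1}$ are all torsion mod $\g_v$, equivalently $\g_{\ell-1}\subseteq\g_v\otimes\Q$ where I abusively write $\g_{\ell-1}$ for the value group attached to the index-$(\ell-1)$ block; I want $\beta_\ell$ torsion mod $\g_v$ too. Since $i$ is not maximal, the block $\Lambda_{\ell}$ is itself not terminal (or $\ell<j$), so $\Lambda_{\ell+1}\ne\emptyset$ — hence there is a key polynomial properly beyond the $\ell$-th block. By our standing choice (the line after Proposition \ref{kp_form}) we may assume the key polynomial immediately starting block $\ell$, call it $Q_\ell$, when it is not a limit key polynomial, satisfies $Q_\ell=q_nQ_{\ell-1}^n+\dots+q_tQ_{\ell-1}^t+\dots+q_0$ with $q_n=1$, $\deg q_t<d_{\ell-1}$, and $w(q_t)+t\beta_{\ell-1}=n\beta_{\ell-1}$ for $0\le t\le n-1$. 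Taking $t=0$ gives $w(q_0)=n\beta_{\ell-1}$; since $\deg q_0<d_{\ell-1}$, the value $w(q_0)=w_{\ell-1}(q_0)$ lies in $\g_{v}+\beta_0\Z+\dots+\beta_{\ell-2}\Z\subseteq\g_v\otimes\Q$ by the induction hypothesis, so $n\beta_{\ell-1}\in\g_v\otimes\Q$, hence $\beta_{\ell-1}\in\g_v\otimes\Q$. (One shifts indices so that this conclusion feeds the next stage.) The case where $Q_\ell$ \emph{is} a limit key polynomial needs separate handling: then $\vartheta_{\ell-1}\ne\emptyset$, and for $i'\in\vartheta_{\ell-1}$ the value group of $w_{i'}$ is again $\g_{\ell-1}$ (Remark \ref{directconsequence}), and we use that some $Q_{i'}$ in this block, not being a limit key polynomial within its own chain, admits the Proposition \ref{kp_form} expansion in terms of its predecessor of the same degree $d_{\ell-1}$, yielding the same conclusion.

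The main obstacle I anticipate is the bookkeeping at the ``joints'' — specifically, dealing cleanly with limit key polynomials, where Proposition \ref{kp_form} does not directly apply to $Q_{j+1}$ itself. The resolution is that within the (infinite, maximum-free) set $\vartheta_{j}$ the consecutive key polynomials all have the \emph{same} degree $d_j$, and each such $Q_{i}$ ($i\in\vartheta_j$, $i$ not first) is a non-limit key polynomial over its immediate predecessor in $\vartheta_j$, so Proposition \ref{kp_form} does apply there and pins down $\beta_j$ (the common value $w(Q_i)$ for $i\in\Lambda_j$, up to the fact that $w(Q_i)$ for $i\in\vartheta_j$ may exceed $\beta_j$ — but I only need one relation of the form $n\beta_j\in\g_v\otimes\Q$, and the $t=0$ term of any such expansion supplies it, using $\deg q_0<d_j$ together with the induction hypothesis $\g_{j-1}\subseteq\g_v\otimes\Q$). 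A secondary point to get right is the very first step: that $\beta_0$ is torsion mod $\g_v$, which follows because $Q_1$ (or some $Q_{i'}$, $i'\in\vartheta_0$) gives a monic expansion $Q_1=Q_0^n+\dots+q_0$ with $q_0\in K$, $w(q_0)=n\beta_0$, and $w(q_0)\in\g_v$ since $\deg q_0=0$. Once all these are in place, summing gives $\g_i=\g_v+\sum_{\ell=0}^j\beta_\ell\Z\subseteq\g_v\otimes_\Z\Q$, which is (\ref{eq:grouprational}).
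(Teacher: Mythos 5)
Your approach is genuinely different from the paper's. The paper argues by contradiction: assume $\beta_i\notin\g_v\otimes_\Z\Q$; then in the $Q_i$-expansion of an arbitrary $g$ the term values $\bar v(g_j)+j\beta_i$ are pairwise distinct (a coincidence would force $\beta_i$ into $\g_v\otimes_\Z\Q$), so $w_i(g)=w(g)$ for all $g$, i.e.\ $w_i=w$, contradicting Remark~\ref{uniqueness}. You instead argue directly, extracting from Proposition~\ref{kp_form} the single relation $w(q_0)=n\beta_\ell$ with $\deg q_0<d_\ell$ and feeding it into the induction. Both proofs rely on the same (unstated) background fact that a polynomial of degree below $d_\ell$ has $w$-value lying in $\g_{\ell-1}$, so you are not worse off there; the real structural difference is that the paper's contradiction uses the \emph{abstract} incompleteness of $w_i$ (Remark~\ref{uniqueness}), while you use the \emph{concrete} algebraic relation between $Q_\ell$ and $Q_{\ell+1}$. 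The paper's route is slicker precisely because it never needs to distinguish limit from non-limit key polynomials.

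That distinction is where your proof has a genuine gap. Proposition~\ref{kp_form} is stated only for $j\in I$ with $\vartheta_j=\emptyset$, i.e.\ only when $Q_{j+1}$ is not a limit key polynomial. In the limit case you propose to apply the same expansion ``to consecutive key polynomials within $\vartheta_j$,'' but nothing in Proposition~\ref{kp_form} (or elsewhere in the paper as cited) asserts such an expansion for $Q_{i'}$, $i'\in\vartheta_j$. As written, that branch of the induction is unsupported. It \emph{can} be repaired without Proposition~\ref{kp_form}: for any $i'\in\vartheta_\ell$, $Q_{i'}$ and $Q_\ell$ are monic of the same degree $d_\ell$, so $Q_{i'}-Q_\ell=c$ with $\deg c<d_\ell$; since $w(Q_{i'})>w(Q_\ell)=\beta_\ell$, the ultrametric inequality forces $w(c)=\beta_\ell$, and then $w(c)\in\g_v\otimes_\Z\Q$ by the degree bound and the induction hypothesis. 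You also mention Remark~\ref{uniqueness} at the outset but never use it — it plays no role in a direct argument, and signalling it as a needed ``structural input'' is misleading. Finally, the indexing slips you flag (proving $\beta_{\ell-1}\in\g_v\otimes_\Z\Q$ when the inductive goal is $\beta_\ell$) should be cleaned up so the step provably advances the induction rather than re-deriving its hypothesis.
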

\begin{proof}
By Remark \ref{directconsequence}, it is sufficient to prove (\ref{eq:grouprational}) for $i\in I$. Take an element $i\in I$. Suppose, inductively, that for each $i'\in I$, $i'<i$ we have $\beta_{i'}\in\g_v\otimes_\Z \Q$ (by Remark \ref{directconsequence} this implies that $\beta_j\in\g_v\otimes_\Z \Q$ for each $j\in\Lambda$ with $j<i$). Assume that $\beta_i\notin \g_v\otimes_\Z \Q$, aiming for contradiction. For each element $g\in K[X]$, all the terms of its $Q_i$-expansion have different values, hence $w_i(g)=w(g)$. Since this holds for all $g\in K[X]$, we have $w_i=w$. By Remark \ref{uniqueness}, this is impossible since $i$ is not the maximal element of
$\Lambda$.
\end{proof}

\begin{corollary}\label{vt_kp}
The valuation $w$ is a value-transcendental extension of $v$ if and only if $\Lambda$ has a maximal element $i_0$, that is
$w=w_{i_0}$, $\beta_i\in\g_v\otimes_\Z \Q$ for all $i<i_0$ and $\beta_{i_0}\notin\g_v\otimes_\Z \Q$.
\end{corollary}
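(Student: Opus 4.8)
The plan is to prove both implications by combining Proposition~\ref{classification}(b) (which characterizes value-transcendental extensions by $\g_{\bar v}\subsetneqq\g_{\bar w}$) with Proposition~\ref{non_max} and the structure of a complete set of key polynomials recalled in Remark~\ref{prop_key_pol} and Remark~\ref{directconsequence}. The key bridge is Remark~\ref{directconsequence}, which says $\g_i=\g_v+\beta_0\Z+\dots+\beta_j\Z$ for $i\in\Lambda_j$, together with the fact (Remark~\ref{uniqueness}) that $w_i\ne w$ whenever $i$ is not the maximal element of $\Lambda$.

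For the forward direction, suppose $w$ is value-transcendental. Then $\frac{\g_w}{\g_v}$ has rational rank $1$, so $\g_w\not\subset\g_v\otimes_\Z\Q$. By Proposition~\ref{non_max}, $\g_i\subset\g_v\otimes_\Z\Q$ for every non-maximal $i\in\Lambda$, hence $w_i\ne w$ for every non-maximal $i$ (if $w_i=w$ then $\g_i=\g_w\not\subset\g_v\otimes_\Z\Q$, a contradiction). Since $\{Q_i\}_{i\in\Lambda}$ is complete, some $w_i=w$; therefore $\Lambda$ must have a maximal element $i_0$ and $w=w_{i_0}$. That $\beta_i\in\g_v\otimes_\Z\Q$ for all $i<i_0$ is exactly Proposition~\ref{non_max} applied to each such $i$. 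Finally $\beta_{i_0}\notin\g_v\otimes_\Z\Q$: writing $i_0\in\Lambda_{j_0}$, by the maximality conditions recalled after Remark~\ref{directconsequence} we have $\Lambda_{j_0}=\{i_0\}$ (so $i_0=j_0\in I$), and $\g_w=\g_{i_0}=\g_v+\beta_0\Z+\dots+\beta_{j_0}\Z$; since all $\beta_i$ with $i<i_0$ lie in $\g_v\otimes_\Z\Q$, if $\beta_{i_0}$ also did then $\g_w\subset\g_v\otimes_\Z\Q$, contradicting value-transcendence.

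For the converse, assume $\Lambda$ has a maximal element $i_0$ with $w=w_{i_0}$, $\beta_i\in\g_v\otimes_\Z\Q$ for $i<i_0$, and $\beta_{i_0}\notin\g_v\otimes_\Z\Q$. Then $\g_w=\g_{i_0}=\g_v+\beta_0\Z+\dots+\beta_{j_0}\Z$ (with $i_0=j_0$), and the hypotheses force $\frac{\g_w}{\g_v}$ to have rational rank exactly $1$: it is generated modulo $\g_v\otimes_\Z\Q$ by the single element $\beta_{i_0}$, so its rational rank is at most $1$, and it is at least $1$ because $\beta_{i_0}\notin\g_v\otimes_\Z\Q$. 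By Proposition~\ref{classification} we know $w$ is valuation-transcendental (since $\g_{\bar v}\subsetneqq\g_{\bar w}$, as $\g_w\not\subset\g_v\otimes_\Z\Q=\g_{\bar v}$ by \eqref{eq:gammavbar}), and it cannot be residue-transcendental (that case requires $\frac{\g_w}{\g_v}$ torsion by Definition~\ref{valuation-}), so $w$ is value-transcendental.

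The main obstacle I anticipate is the bookkeeping at the top index $i_0$: one must carefully invoke the "maximal element of $\Lambda$" criterion (finiteness of $I$ together with $\Lambda_N=\{N\}$) to identify $i_0$ with an element $j_0\in I$ and thereby apply Remark~\ref{directconsequence} to compute $\g_w=\g_{i_0}$ exactly. Everything else is a direct assembly of Proposition~\ref{non_max}, Remark~\ref{directconsequence}, and Proposition~\ref{classification}, plus the elementary observation that adjoining one $\Q$-linearly-independent element to $\g_v\otimes_\Z\Q$ raises the rational rank of the quotient by exactly one.
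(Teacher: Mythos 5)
Your overall plan is sound, and the converse direction and the final steps of the forward direction are fine. There is, however, a genuine logical gap in the forward direction, in the sentence ``Since $\{Q_i\}_{i\in\Lambda}$ is complete, some $w_i=w$.'' Completeness of $\{Q_i\}_{i\in\Lambda}$ only says that for each individual $f\in K[X]$ there is some $i$ (depending on $f$) with $w_{Q_i}(f)=w(f)$; it does not produce a single index $i$ working for all $f$ simultaneously. Indeed, in the valuation-algebraic case $\Lambda$ is infinite with no maximal element, every $w_i\neq w$, and the set is still complete, so the implication ``complete $\Rightarrow$ some $w_i=w$'' is false in general. In the passage in question you are precisely trying to rule out the ``no maximal element'' alternative, so asserting the existence of an $i$ with $w_i=w$ (which, by Remark~\ref{uniqueness}, is tantamount to asserting the existence of a maximal element) begs the question.

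The fix is short and uses completeness correctly. Suppose, for contradiction, that $\Lambda$ has no maximal element. Then every $i\in\Lambda$ is non-maximal, so by Proposition~\ref{non_max} we have $\g_i\subset\g_v\otimes_\Z\Q$ for all $i$. Now take any $f\in K[X]\setminus\{0\}$; by completeness there is an $i$ with $w(f)=w_{Q_i}(f)\in\g_i\subset\g_v\otimes_\Z\Q$. Hence $\g_w\subset\g_v\otimes_\Z\Q$, so $\g_w/\g_v$ is torsion, contradicting that $w$ is value-transcendental. This yields the maximal element $i_0$, and then $w=w_{i_0}$ follows (it is the equivalent formulation of maximality built into Remark~\ref{uniqueness} together with completeness). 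With this substitution the rest of your argument goes through; the identification of $i_0$ with the top index $j_0=N$ of $I$, the computation $\g_w=\g_v+\beta_0\Z+\dots+\beta_{j_0}\Z$ via Remark~\ref{directconsequence}, and the converse via Proposition~\ref{classification}(b) together with \eqref{eq:gammavbar} are all correct, and match the route the paper intends (the corollary is left without proof as a direct consequence of Proposition~\ref{non_max}).
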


\begin{proposition}\label{rt_kp} If $\beta_i\in\g_v\otimes_\Z\Q$ then $w_i$ is a residue-transcendental extension of $v$.
\end{proposition}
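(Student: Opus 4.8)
\emph{Approach.} The plan is to establish two facts: that $\g_{w_i}/\g_v$ is a torsion group, and that $M_{\bar w_i}$ has a maximal element, where $\bar w_i$ denotes a common extension of $w_i$ and $\bar v$ to $\bar K(X)$ (one exists by Proposition 2.1 of \cite{APZ}). Granting these, $w_i$ is not value-transcendental by Definition \ref{valuation-} (a torsion group has rational rank $0$) and not valuation-algebraic by Proposition \ref{classification}(c) applied to $w_i$ in place of $w$, hence $w_i$ is residue-transcendental. Write $i\in\Lambda_j$ and $d=\deg Q_i$. The torsion statement is immediate from Proposition \ref{non_max}: every $i'\in\Lambda$ other than the maximal element satisfies $\g_{i'}\subset\g_v\otimes_\Z\Q$, so $\beta_{i'}\in\g_v\otimes_\Z\Q$ for all $i'<i$; with the hypothesis $\beta_i\in\g_v\otimes_\Z\Q$ this holds for all $i'\le i$, and since $\g_{w_i}=\g_i=\g_v+\beta_0\Z+\dots+\beta_j\Z$ by Remark \ref{directconsequence}, $\g_{w_i}/\g_v$ is torsion.

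\emph{The maximal element of $M_{\bar w_i}$.} I claim it equals $\epsilon_i$. Applying Proposition 3.1 of \cite{J2} to the valuation $w_i$ gives, for every monic $g\in K[X]$, $\delta(\bar w_i,g)=\epsilon_{w_i}(g):=\max_{b\in\N^{\ast}}\frac{w_i(g)-w_i(\partial_b g)}{b}$. For $g=Q_i$ the $Q_i$-expansion is trivial, so $w_i(Q_i)=w(Q_i)=\beta_i$, and $\deg\partial_b Q_i<d$ gives $w_i(\partial_b Q_i)=w(\partial_b Q_i)$; hence $\delta(\bar w_i,Q_i)=\epsilon(Q_i)=\epsilon_i$, so some root $a$ of $Q_i$ has $\bar w_i(X-a)=\epsilon_i$ and $\epsilon_i\in M_{\bar w_i}$. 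For the reverse bound, if $b\in\bar K$ has monic minimal polynomial $p\in K[X]$ then $\bar w_i(X-b)\le\delta(\bar w_i,p)=\epsilon_{w_i}(p)$, so it is enough to prove $\epsilon_{w_i}(f)\le\epsilon_i$ for every monic $f\in K[X]$.

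\emph{The obstacle.} This inequality is the crux of the proof and the step I expect to be the main difficulty. I would argue by contradiction: let $f\in K[X]$ be monic of least degree with $\epsilon_{w_i}(f)>\epsilon_i$; minimality of $\deg f$ forces $f$ to be a key polynomial for $w_i$. Each $Q_{i'}$ with $i'\le i$ is itself a key polynomial for $w_i$ with $\epsilon_{w_i}(Q_{i'})=\epsilon_{i'}$ (using $w_i(Q_{i'})=\beta_{i'}$, which follows from $Q_i$-expansion considerations, and $\deg\partial_b Q_{i'}<\deg Q_{i'}$), and $\{Q_{i'}\}_{i'\le i}$ is a complete set of key polynomials for $w_i$ with last element $Q_i$, so $d(w_i)=d$. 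Consequently $\deg f\ge d$ (since $Q_i$ is a key polynomial for $w_i$ and $\epsilon_{w_i}(f)>\epsilon_{w_i}(Q_i)=\epsilon_i$) and $\epsilon_{w_i}(f)>\epsilon_{w_i}(Q_{i'})$ for every $i'\le i$ (recall $\epsilon_{i'}<\epsilon_i$ for $i'<i$ by Remark \ref{prop_key_pol}). No key polynomial for $w_i$ with these two properties can exist, by the uniqueness statements following Remark \ref{prop_key_pol}; turning this into a rigorous contradiction is the delicate point, requiring the construction of \cite{J1} together with Proposition \ref{kp_form} to treat separately the case $\deg f=d$ and the case in which the complete sequence of key polynomials of $w_i$ contains a limit key polynomial. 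Once $\epsilon_{w_i}(f)\le\epsilon_i$ is established, $\max M_{\bar w_i}=\epsilon_i$, and by the reduction in the first paragraph $w_i$ is residue-transcendental.
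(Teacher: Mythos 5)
Your route is genuinely different from the paper's, and it is also incomplete, as you concede in the ``obstacle'' paragraph. The paper does not go through $M_{\bar w_i}$ at all: it directly exhibits an element of $k_{w_i}$ transcendental over $k_v$. Writing $j$ for the index with $i\in\Lambda_j$ and $l=\min\{s\,:\,s\beta_j\in\g_{j-1}\}$, it picks $g\in K[X]$ of degree $<\deg Q_i$ with $w_i(g)=l\beta_i$, sets $t=\left(Q_i^l/g\right)^*\in k_{w_i}$, and shows that any monic algebraic relation for $t$ over $k_v$ lifts to a contradiction with the way $w_i$ is computed from $Q_i$-expansions. That argument is short and self-contained; it needs neither Proposition 3.1 of \cite{J2}, nor the structure of the complete sequence of key polynomials for $w_i$, nor Proposition \ref{kp_form}.

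By contrast, your reduction hinges on proving $\max M_{\bar w_i}=\epsilon_i$, and the hard direction, $\epsilon_{w_i}(f)\le\epsilon_i$ for all monic $f\in K[X]$, is left open. You set up a minimal-degree counterexample $f$, argue that it would be a key polynomial for $w_i$ of degree $\ge d$ with $\epsilon_{w_i}(f)>\epsilon_{w_i}(Q_{i'})$ for all $i'\le i$, and then gesture at the ``uniqueness statements following Remark \ref{prop_key_pol}''; but those statements concern the degrees $d_j$ and the truncations $w_j$ of the given valuation, and do not by themselves rule out a key polynomial for $w_i$ with larger $\epsilon$, so this is a genuine gap, not a routine detail. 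There is, in fact, a cheap way to close it that you did not spot: one checks that $(w_i)_{Q_i}=w_i$ (the $Q_i$-expansion coefficients have degree $<\deg Q_i$, so $w$ and $w_i$ agree on them, and $w_i(Q_i)=w(Q_i)=\beta_i$) and that $Q_i$ is a key polynomial for $w_i$; then \cite{J1} Proposition 2.10(ii), which the paper itself invokes in the proof of Proposition \ref{mp_kp}, gives $\epsilon_{w_i}(Q_i)\ge\epsilon_{w_i}(f)$ for every $f$, which is precisely the missing bound. With that input your plan does go through: the torsion statement via Proposition \ref{non_max} and Remark \ref{directconsequence} is fine, and the three classes of Definition \ref{valuation-} are exhaustive by the Abhyankar inequality, so ``not value-transcendental and not valuation-algebraic'' does yield residue-transcendental. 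But as written the proof is incomplete at the decisive step, and even once repaired it is considerably heavier machinery than the paper's direct construction of a transcendental residue.
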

\begin{proof}
In what follows, we will adopt the convention that $\g_{-1}:=\g_v$. Let $j\in I$ be such that $i\in\Lambda_j$. Let $l=min\{s\ /\ s\beta_j\in \g_{j-1}\}$. There exists a polynomial $g(X)$ with $\deg(g)<\deg(Q_i)$ such that $w_i(g)=l\beta_i$.\\
Put $t=\left(\frac{Q_i^l}{g}\right)^*$ in $k_{w_i}$. Assume that $t$ is algebraic over $k_v$ and let
$$
t^{n}+a_{n-1}t^{n-1}+\dots+a_0
$$
be an algebraic equation satisfied by $t$ over $k_v$. Choose representatives $a'_{n-1},\dots,a'_0$ in $K$ of the coefficients
$a_{n-1},\dots,a_0$. Then
$w_i\left(\left(\frac{Q_i^l}{g}\right)^{n}+a'_{n-1}\left(\frac{Q_i^l}{g}\right)^{n-1}+\dots+a'_0\right)>0$.  That is,\\
$w_i\left((Q_i^l)^{n}+a'_{n-1}g(Q_i^l)^{n-1}+\dots+a_0g^n\right)>w_i(g^n)= nl\beta_i\geq
w_i\left((Q_i^l)^{n}+a'_{n-1}g(Q_i^l)^{n-1}+\dots+a_0g^n\right)$, which is absurd.
\end{proof}

\begin{corollary}\label{rt_kpresult}
If $i$ is not the maximal element of $\Lambda$ then the valuation $w_i$ is residue-transcendental.
\end{corollary}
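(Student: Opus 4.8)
The goal is Corollary \ref{rt_kpresult}: if $i$ is not the maximal element of $\Lambda$, then $w_i$ is residue-transcendental. The plan is to derive this directly by combining Proposition \ref{rt_kp} with Proposition \ref{non_max}. Proposition \ref{non_max} tells us that $i$ not being maximal forces $\g_i \subset \g_v \otimes_\Z \Q$; in particular, writing $i \in \Lambda_j$, we have $\beta_i \in \g_i \subseteq \g_v \otimes_\Z \Q$. Proposition \ref{rt_kp} then applies verbatim with that hypothesis satisfied, and yields that $w_i$ is a residue-transcendental extension of $v$. So the corollary is essentially a two-line deduction.

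The one point I would want to check carefully — and I expect it to be the only real subtlety — is that the hypothesis ``$\beta_i \in \g_v \otimes_\Z \Q$'' of Proposition \ref{rt_kp} is genuinely implied by Proposition \ref{non_max}, which as stated gives $\g_i \subset \g_v \otimes_\Z \Q$ rather than just a statement about $\beta_i$. But by Remark \ref{directconsequence}, for $i \in \Lambda_j$ we have $\g_i = \g_v + \beta_0\Z + \dots + \beta_j\Z$, so $\beta_i$ is an element of $\g_i$ (indeed $\beta_i$ equals $\beta_j$ up to something already controlled, or more simply $\beta_i \in \g_i$ because $Q_i$ is one of the key polynomials whose value generates $\g_i$ over $\g_{j-1}$ — in any case $\beta_i = w(Q_i) \in \g_i$). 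Hence $\beta_i \in \g_v \otimes_\Z \Q$, which is exactly what Proposition \ref{rt_kp} requires.

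Concretely, the proof I would write is: ``By Remark \ref{directconsequence}, $\beta_i \in \g_i$. Since $i$ is not the maximal element of $\Lambda$, Proposition \ref{non_max} gives $\g_i \subset \g_v \otimes_\Z \Q$, so $\beta_i \in \g_v \otimes_\Z \Q$. The conclusion now follows from Proposition \ref{rt_kp}.'' There is nothing to grind through; the work has all been done in the two preceding propositions, and the corollary just packages them. The only thing worth a sentence of care is the passage from the group-level statement of Proposition \ref{non_max} to the element-level hypothesis of Proposition \ref{rt_kp}, which is immediate from Remark \ref{directconsequence}.
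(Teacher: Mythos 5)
Your proof is correct and matches the paper's, which simply states that the corollary is a direct consequence of Propositions \ref{non_max} and \ref{rt_kp}. You merely spell out the (trivially true, since $\beta_i = w_i(Q_i)$ lies in the value group of $w_i$) bridge from the group-level statement of Proposition \ref{non_max} to the element-level hypothesis of Proposition \ref{rt_kp}.
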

\begin{proof}
This is a direct consequence of Proposition \ref{non_max} and Proposition \ref{rt_kp}.
\end{proof}

\begin{proposition}\label{mp_kp}
If $M_{\bar{w}}$ has a maximal element $\delta$, then $\Lambda$ has a maximal element $i_0$ and there exists a root $a\in\Ro(Q_{i_0})$ such that $(a,\delta)$ is a minimal pair of definition for $\bar{w}$.
\end{proposition}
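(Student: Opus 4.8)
\textbf{Proof plan for Proposition \ref{mp_kp}.}

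The plan is to use Proposition \ref{max_ele} to first obtain a pair of definition $(a,\delta)$ for $\bar w$ with $a\in\bar K$ and $\bar w(X-a)=\delta$, and then to show that $a$ may be taken to be a root of the last key polynomial $Q_{i_0}$. The first task is to show that $\Lambda$ has a maximal element. Suppose not; then I claim $M_{\bar w}$ cannot have a maximal element, contradicting the hypothesis. Indeed, by Proposition \ref{kp_form} and Remark \ref{prop_key_pol}(5) the values $\epsilon_i=\epsilon(Q_i)$ strictly increase along $\Lambda$, and by the identification $\epsilon(f)=\delta(f)=\delta(\bar w,f)$ this means that for every $i$ there is a root of $Q_i$ at which $\bar w(X-\cdot)=\epsilon_i$; so $\sup M_{\bar w}\ge\sup_i\epsilon_i$. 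The key point is that if $\Lambda$ has no maximal element then this supremum is not attained in $M_{\bar w}$: given any $a\in\bar K$, $a$ is a root of some irreducible polynomial of degree $<d(w)=\lim d_j$ (or of bounded degree if $I$ is infinite), and by Lemma \ref{minimal_pair}-type considerations together with Remark \ref{uniqueness} (i.e.\ $w_i\ne w$ for every $i$) one shows $\bar w(X-a)<\epsilon_i$ for $i$ large. Hence no single element of $M_{\bar w}$ is maximal, a contradiction. Therefore $\Lambda$ has a maximal element $i_0$ and $w=w_{i_0}$.

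Next I would produce the pair of definition. By Proposition \ref{max_ele}, since $M_{\bar w}$ has maximal element $\delta$, there is $(b,\delta)\in\bar K\times\g_{\bar w}$ with $\bar w=w_{(b,\delta)}$ and $\bar w(X-b)=\delta$. By the definition of $\D(\bar w)$ we may moreover choose $b$ with $[K(b):K]=\D(\bar w)$, i.e.\ $(b,\delta)$ minimal. It remains to replace $b$ by a root $a$ of $Q_{i_0}$. The strategy here is to show $\delta=\delta(Q_{i_0})=\epsilon(Q_{i_0})$ and then use Lemma \ref{pairofdefinition}: pick a root $a\in\Ro(Q_{i_0})$ realizing $\delta(\bar w,Q_{i_0})$, so $\bar w(X-a)=\epsilon_{i_0}$. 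If $\delta=\epsilon_{i_0}$ then $\bar w(X-a)=\bar w(X-b)=\delta$ forces $\bar v(a-b)\ge\delta$ (otherwise $\bar w(X-a)=\bar v(a-b)<\delta$), and Lemma \ref{pairofdefinition} gives that $(a,\delta)$ is also a pair of definition for $\bar w$. For minimality, $\deg Q_{i_0}=d(w)$ and by Remark \ref{uniqueness} and Lemma \ref{minimal_pair} one checks $\D(\bar w)=d(w)$, so $[K(a):K]=[K(b):K]=\D(\bar w)$ and $(a,\delta)$ is minimal.

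The remaining gap is the equality $\delta=\epsilon_{i_0}$, which I expect to be the main obstacle. One direction, $\epsilon_{i_0}=\delta(\bar w,Q_{i_0})\le\delta=\max M_{\bar w}$, is immediate since $\delta(\bar w,Q_{i_0})\in M_{\bar w}$. For the reverse inequality, suppose $\epsilon_{i_0}<\delta$, so $\bar w(X-b)=\delta>\bar w(X-a)=\epsilon_{i_0}$ for every root $a$ of $Q_{i_0}$; then $\bar w(b-a)=\epsilon_{i_0}$ for every such $a$, whence $\bar w(Q_{i_0}(b))=\sum_{a}\bar w(b-a)=(\deg Q_{i_0})\,\epsilon_{i_0}=w_{i_0}(Q_{i_0})=\beta_{i_0}$. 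On the other hand, writing the $Q_{i_0}$-expansion of a well-chosen polynomial (or directly using that $b$ gives a pair of definition realizing the maximum and that $Q_{i_0}$ is the last key polynomial, so $w=w_{i_0}$), one derives $\bar w(Q_{i_0}(b))>\beta_{i_0}$ from $\delta>\epsilon_{i_0}$, a contradiction. Making this last estimate precise — relating $\bar w(Q_{i_0}(b))$ to $\delta$ via the Taylor expansion $Q_{i_0}(X)=\sum_c\partial_c Q_{i_0}(b)\,(X-b)^c$ and the inequality $\bar w(\partial_c Q_{i_0}(b))\ge w(\partial_c Q_{i_0})$, together with $\epsilon(Q_{i_0})=\max_c\frac{w(Q_{i_0})-w(\partial_cQ_{i_0})}{c}$ — is the technical heart of the argument.
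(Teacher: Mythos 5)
Your proposal takes a genuinely different route from the paper, and unfortunately it has gaps at both of its main steps.

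The paper's proof does not first establish abstractly that $\Lambda$ has a maximal element and then separately identify $\delta$ with $\epsilon_{i_0}$. Instead, starting from a minimal pair $(a,\delta)$ (which exists by Proposition~\ref{max_ele}), it shows directly that the minimal polynomial $f$ of $a$ is a key polynomial, then uses completeness to find $i$ with $w_i(f)=w(f)$ and invokes Proposition~2.10(ii) of \cite{J1} (which gives $\epsilon(Q_i)\ge\epsilon(f)$ whenever $w_i(f)=w(f)$) together with the minimality of $(a,\delta)$ to conclude $\epsilon(Q_i)=\epsilon(f)$ and then that this $i$ is the maximal element. That cited lemma from \cite{J1} is the crucial technical ingredient your proposal is missing, and its absence shows up concretely in both halves of your argument.

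In your first step, the claim that ``given any $a\in\bar K$, $a$ is a root of some irreducible polynomial of degree $<d(w)=\lim d_j$'' is simply false: if $I=\{0,\dots,N\}$ is finite but $\vartheta_N\ne\emptyset$ (so $\Lambda$ has no maximal element while the degrees are all bounded by $d_N=d(w)$), then $a$ can have arbitrarily large degree and the intended ``$\bar w(X-a)<\epsilon_i$ for $i$ large'' has no justification from degree comparisons alone. Your argument only has a chance when degrees are unbounded; the bounded-degree limit case requires exactly the \cite{J1}, Prop.~2.10(ii) argument.

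In your second step, the computation of $\bar w(Q_{i_0}(b))$ is wrong in two places, and the final contradiction does not arise. First, $\epsilon_{i_0}=\delta(\bar w,Q_{i_0})$ is the \emph{maximum} of $\bar w(X-a)$ over roots $a$, so assuming $\delta>\epsilon_{i_0}$ you only get $\bar v(a-b)=\bar w(X-a)\le\epsilon_{i_0}$, not equality for every root; consequently $\sum_a\bar v(a-b)\ne(\deg Q_{i_0})\epsilon_{i_0}$ in general. In fact one has $\sum_a\bar v(a-b)=\sum_a\bar w(X-a)=\bar w(Q_{i_0})=\beta_{i_0}$, so $\bar v(Q_{i_0}(b))=\beta_{i_0}$ exactly. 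Second, the Taylor-expansion estimate you sketch, using $\bar v(\partial_cQ_{i_0}(b))\ge w(\partial_cQ_{i_0})$ and $\delta>\epsilon_{i_0}$, gives $\bar v(\partial_cQ_{i_0}(b))+c\delta>\beta_{i_0}$ for all $c\ge1$, and since $\min_c\{\bar v(\partial_cQ_{i_0}(b))+c\delta\}=\bar w(Q_{i_0})=\beta_{i_0}$ this forces the minimum to be attained at $c=0$, i.e.\ $\bar v(Q_{i_0}(b))=\beta_{i_0}$. That is \emph{consistent}, not contradictory. So your ``technical heart'' does not produce the needed contradiction. To rule out $\delta>\epsilon_{i_0}$ you would instead take the minimal polynomial $f$ of $b$, observe $\epsilon(f)\ge\delta>\epsilon(Q_i)$ for every $i$, and then use completeness together with \cite{J1}, Prop.~2.10(ii) to reach the contradiction — which is precisely the paper's argument.
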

\begin{proof}
Assume that $M_{\bar{w}}$ has a maximal element $\delta$. Let $(a,\delta)$ be a minimal pair of definition for $\bar{w}$ (it exists by Proposition \ref{max_ele}).\\

Let $f(X)$ be the minimal polynomial of $a$ over $K$. The polynomial $f(X)$ is a key polynomial for $w$, since if $g(X)$ is such that $\epsilon(g)\geq\epsilon(f)$ then $\delta(g)\geq\delta (f)=\delta$ hence $\deg(g)\geq \deg(f)$, since $(a,\delta)$ is a minimal pair.\\

If there existed $i\in\Lambda$ such that $\epsilon(Q_i)>\epsilon(f)$, we would have $\delta(Q_i)>\delta(f)=\delta$ which is impossible by definition of $f$ and minimal pair of definition. Hence
\begin{equation}\label{eq:epsilonQileepsilonf}
\epsilon(Q_i)\le\epsilon(f)\quad\text{ for all }i\in\Lambda.
\end{equation}
By definition of key polynomial, this implies that $\deg(Q_i)\le\deg(f)$ for all $i\in\Lambda$.\\

Let $i\in\Lambda$ be such that $w_i(f)=w(f)$. By \cite{J1} Proposition 2.10 (ii) we must have $\epsilon(Q_i)\ge\epsilon(f)$, hence
$\epsilon(Q_i)=\epsilon(f)$ in view of (\ref{eq:epsilonQileepsilonf}). We conclude that $\delta(Q_i)=\delta$.\\

Choose $a'\in\Ro(Q_i)$, such that $\bar{w}(X-a')=\delta$. By Lemma \ref{pairofdefinition} $(a',\delta)$ is a pair of definition for
$\bar{w}$, and since $\mathcal D(\bar{w})=[K(a'):K]$, we have that $(a',\delta)$ is a minimal pair of definition for $\bar{w}$.\\

Now $i$ must be the greatest element of $\Lambda$ since otherwise, if there exists $i'>i$, by Remark \ref{prop_key_pol} 3 we have $e(Q_{i'})>e(Q_i)$ and $\delta(Q_{i'})>\delta(Q_i)=\delta$, which is impossible.
\end{proof}

\begin{theorem}\label{valuation_trans}
The valuation $w$ is a valuation-transcendental extension of $v$ if and only if $\Lambda$ has a maximal element.\\

Moreover, in this case, if $\{Q_i\}_{i\in\Lambda}$ is a complete set of key polynomials for $w$ and $i_0$ the maximal element of
$\Lambda$, then for every common extension $\bar{w}$ of $\bar{v}$ and $w$ to $\bar{K}(X)$, $\mathcal D(\bar{w})=d(w)$ and there exists $a\in \Ro(Q_{i_0})$ such that $w_{(a,\epsilon_{i_0})}=\bar{w}$.
\end{theorem}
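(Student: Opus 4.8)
The plan is to prove the two directions of the equivalence separately and then extract the quantitative ``Moreover'' statement from the machinery already assembled. For the equivalence, recall that by Proposition~\ref{classification} the valuation $w$ is valuation-transcendental (i.e. residue- or value-transcendental) if and only if $M_{\bar w}$ has a maximal element. So it suffices to show that $M_{\bar w}$ has a maximal element if and only if $\Lambda$ has a maximal element. One direction is immediate from Proposition~\ref{mp_kp}: if $M_{\bar w}$ has a maximal element $\delta$, then that proposition already produces a maximal element $i_0$ of $\Lambda$ together with a root $a\in\Ro(Q_{i_0})$ with $(a,\delta)$ a minimal pair of definition for $\bar w$. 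For the converse, suppose $\Lambda$ has a maximal element $i_0$; then the complete set of key polynomials has a last element $Q_{i_0}$ and (by the uniqueness remarks, since $\Lambda_{i_0}=\{i_0\}$) we have $w=w_{i_0}$. I would then invoke Corollary~\ref{vt_kp} and Proposition~\ref{rt_kp}: either $\beta_{i_0}\notin\g_v\otimes_\Z\Q$, in which case $w$ is value-transcendental by Corollary~\ref{vt_kp}, or $\beta_{i_0}\in\g_v\otimes_\Z\Q$, in which case $w=w_{i_0}$ is residue-transcendental by Proposition~\ref{rt_kp}. In both cases $w$ is valuation-transcendental, which closes the equivalence.

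For the ``Moreover'' part, assume $\Lambda$ has maximal element $i_0$, so that $w=w_{i_0}$ is valuation-transcendental, hence (Proposition~\ref{classification} again) $M_{\bar w}$ has a maximal element $\delta$. By Proposition~\ref{max_ele} we may write $\bar w=w_{(a,\delta)}$ with $\bar w(X-a)=\delta$; choosing $(a,\delta)$ minimal, the argument of Proposition~\ref{mp_kp} shows that the minimal polynomial $f$ of $a$ is a key polynomial with $\delta(f)=\delta$, that $\epsilon(Q_i)\le\epsilon(f)$ for all $i$, and that in fact the index $i$ with $w_i(f)=w(f)$ is forced to be $i_0$ with $\delta(Q_{i_0})=\delta$, after which one can replace $a$ by a root $a'\in\Ro(Q_{i_0})$ with $\bar w(X-a')=\delta$. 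Since $\delta(Q_{i_0})=\epsilon(Q_{i_0})=\epsilon_{i_0}$ by the Proposition of \cite{J2} quoted in Section~\ref{keypolynomials} and $\delta=\bar w(X-a')$, we get $\delta=\epsilon_{i_0}$, so $\bar w=w_{(a',\epsilon_{i_0})}$ with $a'\in\Ro(Q_{i_0})$; this is the desired element. Finally, $\mathcal D(\bar w)=[K(a'):K]=\deg Q_{i_0}=d(w)$: the first equality because $(a',\delta)$ is a minimal pair of definition (as $\mathcal D(\bar w)=[K(a'):K]$ was already noted in Proposition~\ref{mp_kp}), the middle because $a'$ is a root of the irreducible polynomial $Q_{i_0}$, and the last by the definition of $d(w)$ as the degree of the last key polynomial.

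I expect the main obstacle to be the converse direction of the equivalence, specifically making rigorous the claim that when $\Lambda$ has a maximal element $i_0$ one genuinely has $w=w_{i_0}$ and that $w_{i_0}$ is valuation-transcendental rather than merely ``not ruled out.'' The subtlety is that Proposition~\ref{rt_kp} is stated for $w_i$, not for $w$ itself, so one must first certify $w=w_{i_0}$; this uses that $\Lambda$ maximal forces $\Lambda_{i_0}=\{i_0\}$ (from the characterization stated just before the definition of limit key polynomial) together with completeness of $\{Q_i\}_{i\in\Lambda}$, which gives some $i$ with $w_i(f)=w(f)$ for each $f$, and the uniqueness of the valuations $w_j$ to identify that $i$ with $i_0$. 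One should also be slightly careful about the dichotomy $\beta_{i_0}\in\g_v\otimes_\Z\Q$ or not, and that Proposition~\ref{rt_kp} applies to $w_{i_0}=w$ precisely in the first case; in the second case Corollary~\ref{vt_kp} does the job directly. Everything else is a matter of quoting Propositions~\ref{classification}, \ref{max_ele}, \ref{mp_kp} and chasing the equalities $\delta=\epsilon_{i_0}$ and $\mathcal D(\bar w)=\deg Q_{i_0}=d(w)$.
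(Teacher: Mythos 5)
Your proof is correct and follows essentially the same route as the paper's: both directions of the equivalence are reduced via Proposition~\ref{classification} to the existence of a maximal element of $M_{\bar w}$, the forward direction is Proposition~\ref{mp_kp}, the converse is Corollary~\ref{vt_kp} together with Proposition~\ref{rt_kp}, and the ``Moreover'' is extracted from the construction inside Proposition~\ref{mp_kp}. The only place you go beyond the paper is in spelling out why $\Lambda$ having a maximal element $i_0$ forces $w=w_{i_0}$ before Proposition~\ref{rt_kp} can be applied — a point the paper leaves implicit — but this is a matter of added detail rather than a different argument.
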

\begin{proof}
Let $\bar{w}$ be a common extension of $\bar{v}$ and $w$ to $\bar{K}(X)$. By Proposition \ref{classification} $w$ is
valuation-transcendental if and only if $M_{\bar{w}}$ has a maximal element.\\

Now if $w$ is  valuation-transcendental, by Proposition \ref{mp_kp}, $\Lambda$ has a maximal element.\\

Conversely, if $\Lambda$ has a maximal element, by Corollary \ref{vt_kp} and Proposition \ref{rt_kp}, $w$ is valuation-transcendental. \\

The last statement of the Theorem is a direct consequence of Proposition \ref{mp_kp}.

\end{proof}

\begin{corollary}\label{val_trans_numext}
If $n$ is the number of distinct roots of the final key polynomial in $\{Q_i\}_{i\in\Lambda}$, then there exist at most $n$ common extensions of $w$ and $\bar{v}$ to $\bar{K}(X)$.
\end{corollary}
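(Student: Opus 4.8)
The plan is to obtain this as an immediate consequence of Theorem~\ref{valuation_trans}. First I would observe that the hypothesis of the statement presupposes that $\{Q_i\}_{i\in\Lambda}$ has a final element, that is, that $\Lambda$ has a maximal element $i_0$ (equivalently, by the criterion recalled just after Remark~\ref{directconsequence}, $I=\{0,\dots,N\}$ is finite and $\Lambda_N=\{N\}$, so that $i_0=N$ and $Q_{i_0}=Q_N$). If no such final key polynomial exists the statement is vacuous, so we may assume $i_0$ exists. By Theorem~\ref{valuation_trans}, $w$ is then valuation-transcendental and, for \emph{every} common extension $\bar{w}$ of $\bar{v}$ and $w$ to $\bar{K}(X)$, there is some $a\in\Ro(Q_{i_0})$ with $\bar{w}=w_{(a,\epsilon_{i_0})}$.

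Next I would turn this into a counting argument. Let $E$ be the set of common extensions of $\bar{v}$ and $w$ to $\bar{K}(X)$. Using the existence part of Theorem~\ref{valuation_trans}, choose for each $\bar{w}\in E$ a root $a_{\bar{w}}\in\Ro(Q_{i_0})$ with $\bar{w}=w_{(a_{\bar{w}},\epsilon_{i_0})}$. The assignment $\bar{w}\mapsto a_{\bar{w}}$ is an injective map from $E$ to $\Ro(Q_{i_0})$: if $a_{\bar{w}_1}=a_{\bar{w}_2}=a$, then $\bar{w}_1=w_{(a,\epsilon_{i_0})}=\bar{w}_2$. Hence $\#E\le\#\Ro(Q_{i_0})=n$, which is the assertion.

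There is essentially no obstacle here, since all the content is already in Theorem~\ref{valuation_trans}; the remaining points need only a little care and are of a bookkeeping nature. First, $n$ is the number of \emph{distinct} roots of $Q_{i_0}$, and this is exactly what the argument delivers, because $w_{(a,\delta)}$ depends on $a$ only as an element of $\bar{K}$ (through the Taylor expansion at $a$) and not on its multiplicity as a root of $Q_{i_0}$; so it is the set $\Ro(Q_{i_0})$, of cardinality $n$, that parametrizes the possibilities. In characteristic $0$ the irreducible polynomial $Q_{i_0}$ is separable and $n=d(w)=\mathcal D(\bar{w})$, whereas in positive characteristic $n$ may be strictly smaller, which is why the bound is phrased in terms of $n$. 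Second, distinct roots $a\ne a'$ of $Q_{i_0}$ may well produce the same valuation — precisely when $\bar{v}(a-a')\ge\epsilon_{i_0}$, by Lemma~\ref{pairofdefinition} — but this only makes $\bar{w}\mapsto a_{\bar{w}}$ non-surjective, and hence only strengthens the bound $\#E\le n$.
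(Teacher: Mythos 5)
Your proof is correct and takes essentially the same route the paper intends: the corollary is stated without proof as an immediate consequence of Theorem~\ref{valuation_trans}, and the injectivity argument you spell out (each common extension $\bar{w}$ equals $w_{(a,\epsilon_{i_0})}$ for some $a\in\Ro(Q_{i_0})$, and the map $\bar{w}\mapsto a_{\bar{w}}$ is injective since $a$ determines $w_{(a,\epsilon_{i_0})}$) is exactly the counting step the authors leave implicit. Your side remarks on separability and on Lemma~\ref{pairofdefinition} explaining possible non-surjectivity are accurate but inessential.
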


\begin{example}\label{ExampleX} The pair $(0,\beta_0)=(0,w(X-a))=(0,\epsilon_0)$ is a minimal pair for the valuation $w_0$ (see Remark \ref{prop_key_pol} 2 and the notation that follows it).
\end{example}

\section{Common Extensions}\label{common_ext}

Let $\{Q_i\}_{i\in\Lambda}$ be a complete set of key polynomials for $w$.\\

By Theorem \ref{valuation_trans}, if $w$ is a valuation-transcendental extension of $v$ then $w=w_{i_0}$, where $i_0$ is the maximal element of $\Lambda$, and if $\bar{w}$ is a common extension of $w$ and $\bar{v}$ to $\bar{K}(X)$, then there exists $a\in\Ro(Q_{i_0})$, such that $\bar{w}:=w_{(a,\delta)}$, where $\delta=max (M_{\bar{w}})=\epsilon(Q_{i_0})$.\\

In this section, we investigate which roots $a\in\Ro(Q_{i_0})$ are such that $w_{(a,\delta)}$ is a common extension of $w$ and
$\bar{v}$. By definition of $w_{(a,\delta)}$, it is an extension of $\bar{v}$, hence the question is if the restriction of $w_{(a,\delta)}$ to $K(X)$ is equal to $w$.\\   

Note that for each $i\in\Lambda$, since the valuation $w_i$ is a valuation-transcendental extension of $v$, we have that every common extension of $\bar{v}$ and $w_i$ to $\bar{K}(X)$ has the form $w_{(b,\epsilon_i)}$, with $b\in\Ro(Q_i)$ (Proposition \ref{max_ele}).\\

Assume that we know a minimal pair of definition $(a,\delta)$ for a common extension $\bar{w}$ of $\bar{v}$ and $w$ to $\bar{K}(X)$. The following Lemma gives a criterion to characterize the other minimal pairs of definition for  common extensions of $\bar{v}$ and $w$ to $\bar{K}(X)$.

\begin{lemma}\label{criterion_roots_mp}
Let $\bar{w}$ be a common extension of $\bar{v}$ and $w$ to $\bar{K}(X)$ and let $(a,\delta)$ be a minimal pair for $\bar{w}$. Let $f$ be the minimal polynomial of $a$ over $K$ and let $b\in\Ro(f)$. Then $w_{(b,\delta)}$ is a common extension of $\bar{v}$ and $w$ to $\bar{K}(X)$ if and only if for every $g\in K[X]$ with $\deg(g)<d(w)$, we have $w(g(X))=\bar{v}(g(b))$.
\end{lemma}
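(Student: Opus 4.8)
The plan is to prove both directions of the equivalence, using the structure already established: by Theorem \ref{valuation_trans} we know $w=w_{i_0}$ for the maximal element $i_0$ of $\Lambda$, that $\mathcal D(\bar w)=d(w)=\deg f$, and that $\delta=\epsilon_{i_0}=\delta(f)=\max(M_{\bar w})$. Note also that $b\in\Ro(f)$ means $f$ is the minimal polynomial of $b$ over $K$ as well (since $f$ is irreducible), so $[K(b):K]=\deg f=\mathcal D(\bar w)$; hence if $w_{(b,\delta)}$ restricts to $w$ on $K(X)$, it is automatically a \emph{minimal} pair of definition for that restriction. The real content is to decide when that restriction equals $w$.

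\medskip

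For the forward direction, assume $w_{(b,\delta)}$ is a common extension of $\bar v$ and $w$. Then its restriction to $K(X)$ is $w$, so for every $g\in K[X]$ we have $w(g)=w_{(b,\delta)}(g)$. Apply Lemma \ref{minimal_pair} to the pair $(b,\delta)$ and the valuation $\bar w':=w_{(b,\delta)}$: for every $g\in K[X]$ with $\deg g<\mathcal D(\bar w')=\deg f=d(w)$ we get $w(g)=\bar v(g(b))$. That is exactly the stated condition, so this direction is essentially immediate once one invokes Lemma \ref{minimal_pair}.

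\medskip

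For the converse, assume $w(g)=\bar v(g(b))$ for all $g\in K[X]$ with $\deg g<d(w)$. I must show $w_{(b,\delta)}|_{K(X)}=w$. First observe that $w_{(b,\delta)}(X-b)=\delta=\epsilon_{i_0}=\delta(f)$, and since $b\in\Ro(f)$ we have $\delta(f)\ge\bar w_{(b,\delta)}(X-b)$ trivially; the point is that $(b,\delta)$ with $\delta=\delta(f)$ makes $X-b$ realize the maximal value, so by Lemma \ref{rt_lem}/Lemma \ref{vt_lem} (via Proposition \ref{max_ele}) the valuation $\mu:=w_{(b,\delta)}$ has $M_\mu$ with maximal element $\delta$ and $\mu$ is valuation-transcendental of the same type as $w$. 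The key polynomial structure should then let me compare $\mu|_{K(X)}$ with $w$ step by step along the complete sequence $\{Q_i\}_{i\in\Lambda}$: for $i<i_0$ one shows $\mu(Q_i)=w(Q_i)=\beta_i$ and the $Q_i$-expansion behaves correctly, so that $\mu|_{K(X)}$ agrees with $w_i$ on polynomials of degree $<d_{i_0}=d(w)$; the hypothesis $w(g)=\bar v(g(b))=\mu(g)$ for $\deg g<d(w)$ gives precisely this agreement for all $g$ below the final degree. Then since $\mu(Q_{i_0})=\epsilon$-data forces $\mu(Q_{i_0})=\bar v(Q_{i_0}(b))+\,$(appropriate term) $=\beta_{i_0}=w(Q_{i_0})$ — using that $Q_{i_0}$ has degree $d(w)$ and its Taylor coefficients at $b$ have degree $<d(w)$, so the hypothesis applies to them — the $Q_{i_0}$-expansion argument of Proposition \ref{max_ele} shows $\mu(h)=w_{i_0}(h)=w(h)$ for every $h\in K[X]$.

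\medskip

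The main obstacle will be this last step: passing from "$\mu$ agrees with $w$ on polynomials of degree $<d(w)$" to "$\mu$ agrees with $w$ on all of $K[X]$". The tool is the $Q_{i_0}$-expansion together with the fact (Proposition \ref{max_ele}, applied to $\mu$) that $\mu=w_{(b,\delta)}=\mu_{Q_{i_0}}$ truncated, i.e. that for $\mu$ there is no strict inequality $\mu(h)>\mu_{(b,\delta)}(h)$ because $(b,\delta)$ is literally a pair of definition for $\mu$; the work is to identify $\mu|_{K[X]}$ with the truncation $w_{Q_{i_0}}$ of $w$ and then use completeness of $\{Q_i\}_{i\in\Lambda}$ (which has maximal element $i_0$, so $w=w_{i_0}=w_{Q_{i_0}}$) to conclude $\mu|_{K(X)}=w$. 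I expect the verification that the Taylor coefficients of $Q_{i_0}$ at $b$ all have degree $<d(w)$ — so that the hypothesis is applicable to each of them and controls $\mu(Q_{i_0})$ — to be the one genuinely delicate point, and it is exactly here that the restriction $\deg g<d(w)$ in the statement (rather than $\le$) is used.
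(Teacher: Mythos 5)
Your forward direction is exactly the paper's: it applies Lemma \ref{minimal_pair} to $w_{(b,\delta)}$ after noting $\mathcal D(w_{(b,\delta)})=d(w)$ via Theorem \ref{valuation_trans}. That part is fine.

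Your converse direction takes a different and more laborious route (through the $Q_{i_0}$-expansion and truncation $w_{Q_{i_0}}$), and it has a genuine gap at exactly the spot you flag as ``genuinely delicate.'' In the Taylor expansion of $Q_{i_0}$ around $b$, the coefficients are $\partial_t Q_{i_0}(b)$, and while $\partial_t Q_{i_0}$ has degree $<d(w)$ for $t\ge 1$, the $t=0$ coefficient is $Q_{i_0}(b)$ itself, with $\deg Q_{i_0}=d(w)$, so the hypothesis does not control it. Moreover $b$ is only known to be a root of $f$, and $f$ need not equal $Q_{i_0}$ (they are both monic irreducible of degree $d(w)$, but $a\in\Ro(f)$ need not lie in $\Ro(Q_{i_0})$; Proposition \ref{mp_kp} only gives \emph{some} root of $Q_{i_0}$ forming a minimal pair), so you cannot conclude $Q_{i_0}(b)=0$. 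Consequently the plan to force $\mu(Q_{i_0})=\beta_{i_0}$ and then bootstrap through the $Q_{i_0}$-expansion does not close.

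The paper avoids all of this with one observation: since $a$ and $b$ are both roots of $f$, Euclidean division $g=qf+r$ with $\deg r<d(w)$ gives $g(a)=r(a)$ and $g(b)=r(b)$; combined with the hypothesis and Lemma \ref{minimal_pair} applied to $(a,\delta)$, this shows $\bar v(g(a))=\bar v(g(b))$ for \emph{every} $g\in K[X]$, regardless of degree. Then one compares the Taylor expansions of an arbitrary $g$ around $a$ and around $b$: the coefficients are $\partial_t g(a)$ and $\partial_t g(b)$ with $\partial_t g\in K[X]$, so by the claim they have equal value, giving $w_{(a,\delta)}(g)=w_{(b,\delta)}(g)$ directly. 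No appeal to the key-polynomial sequence, $Q_{i_0}$-expansions, or truncations is needed for the converse. I would suggest replacing your second half with this Euclidean-division argument; it is shorter, elementary, and sidesteps the $f$-versus-$Q_{i_0}$ ambiguity that your plan cannot resolve.
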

\begin{proof}
Suppose first that $w_{(b,\delta)}$  is a common extension of $\bar{v}$ and $w$. By Theorem \ref{valuation_trans} we have
$\mathcal D(w_{(b,\delta)})=d(w)$. By Lemma \ref{minimal_pair}, for every $g\in K[X]$ with $\deg(g)<d(w)$ we have
$\bar{v}(g(b))=w(g(X))$.\\

Conversely, suppose that for every $g\in K[X]$ with $\deg(g)<d(w)$ we have
$$
\bar{v}(g(b))=w(g(X)).
$$
We claim that for every $g\in K[X]$ we have $\bar{v}(g(a)))=\bar{v}(g(b))$.\\

By  Lemma \ref{minimal_pair} we have $\bar{v}(g(a)))=w(g(X))$ for every $g\in K[X]$ with $\deg(g)<d(w)$. Hence for every $g\in
K[X]$ with $\deg(g)<d(w)$, we have $\bar{v}(g(a))=\bar{v}(g(b))$. We still need to prove that for every $g\in K[X]$ with
$\deg(g)\geq d(w)$ we have $\bar{v}(g(a)))=\bar{v}(g(b))$.\\

Consider $g\in K[X]$ with $\deg g\geq d(w)$. Since $(a,\delta)$ is a minimal pair, $d(w)=\deg(f)$. Let $g(X)=q(X)f(X)+r(X)$ be the Euclidean division of $g(X)$ by $f(X)$, with $\deg r(X)<d(w)$.\\
We have $g(a)=r(a)$ and $g(b)=r(b)$. Since we already know that $\bar{v}(r(a))=\bar{v}(r(b))$, the claim is proved.\\

We want to prove that $w_{(b,\delta)}$ is equal to $w$ on $K(X)$. For this it is sufficient to prove that it is equal to $w$ on $K[X]$, therefore it is sufficient to prove that for every $g(X)\in K[X]$ we have $w_{(a,\delta)}(g(X))=w_{(b,\delta)}(g(X))$.\\

Take $g(X)\in K[X]$ and write the Taylor expansions of $g(X)$:\\

\begin{align*}
g(X)&=g_n(a)(X-a)^{n}+\dots+g_0(a),\\
g(X)&=g_n(b)(X-b)^{n}+\dots+g_0(b),
\end{align*}
where for each $t$, $0\leq t\leq n$, we have that $g_t(X)=\partial_t g(X)$ is a polynomial in $K[X]$, hence, by the above discussion,
$\bar{v}(g_t(a))=\bar{v}(g_t(b))$.\\

Now by definition of $w_{(a,\delta)}$ and of $w_{(b,\delta)}$ we have

\begin{align*}
w_{(a,\delta)}(g(X))&=\inf_{0\leq t\leq n}\{\bar{v}(g_t(a))+t\delta\}\\
&=\inf_{0\leq t\leq n}\{\bar{v}(g_t(b))+t\delta\}\\
&=w_{(b,\delta)}(g(X)).
\end{align*}
\end{proof}

\begin{lemma}\label{roots_kp_ineq}
Let $i\in \Lambda$ and suppose that for every $b\in\Ro(Q_i)$, $\bar{w}_i:=w_{(b,\epsilon_i)}$ is an extension of $w_i$. Let $\alpha\in\bar{K}$. If for every root $b\in\Ro(Q_i)$ we have $\bar{v}(\alpha-b)<\epsilon_i$ then
$\bar{v}(Q_i(\alpha))<w_i(Q_i)=\beta_i$.
\end{lemma}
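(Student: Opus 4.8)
The plan is to write both sides of the desired inequality as sums over the roots of $Q_i$ in $\bar K$ and to compare those sums term by term. Since $Q_i$ is monic of degree $d:=\deg Q_i\ge 1$ and $\bar K$ is algebraically closed, I factor $Q_i=\prod_{k=1}^d(X-r_k)$ with $r_k\in\bar K$ (repeated according to multiplicity). Because $\bar v$ is a valuation, $\bar v(Q_i(\alpha))=\sum_{k=1}^d\bar v(\alpha-r_k)$; the hypothesis that $\bar v(\alpha-b)<\epsilon_i$ for every $b\in\Ro(Q_i)$, together with the finiteness of $\epsilon_i$, already forces $\alpha\notin\Ro(Q_i)$, so every summand is finite. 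On the other side, for \emph{any} $b\in\Ro(Q_i)$ the hypothesis gives that $w_{(b,\epsilon_i)}$ extends $w_i$; since the $Q_i$-expansion of $Q_i$ is simply $1\cdot Q_i$, we have $w_i(Q_i)=w_{Q_i}(Q_i)=w(Q_i)=\beta_i$, and hence, using additivity of the valuation $w_{(b,\epsilon_i)}$ on the factorization of $Q_i$ and the formula $w_{(b,\epsilon_i)}(X-r)=\inf\{\epsilon_i,\bar v(b-r)\}$ (immediate from the Taylor expansion $X-r=(X-b)+(b-r)$), I get $\beta_i=\sum_{k=1}^d\inf\{\epsilon_i,\bar v(b-r_k)\}$.

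The crux is then to pick $b$ well and to bound $\inf\{\epsilon_i,\bar v(b-r_k)\}$ from below by $\bar v(\alpha-r_k)$ for each $k$. I would take $b\in\Ro(Q_i)$ realizing $\gamma:=\bar v(\alpha-b)=\max_{r\in\Ro(Q_i)}\bar v(\alpha-r)$, so that $\gamma<\epsilon_i$ by hypothesis. For an arbitrary root $r_k$, writing $b-r_k=(b-\alpha)+(\alpha-r_k)$ and using $\bar v(\alpha-r_k)\le\gamma=\bar v(b-\alpha)$, the ultrametric inequality gives $\bar v(b-r_k)\ge\min\{\gamma,\bar v(\alpha-r_k)\}=\bar v(\alpha-r_k)$. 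Consequently $\inf\{\epsilon_i,\bar v(b-r_k)\}\ge\inf\{\epsilon_i,\bar v(\alpha-r_k)\}=\bar v(\alpha-r_k)$, the last equality because $\bar v(\alpha-r_k)\le\gamma<\epsilon_i$. Summing over $k$ yields $\beta_i\ge\bar v(Q_i(\alpha))$.

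To upgrade this to a strict inequality I would single out an index $k_0$ with $r_{k_0}=b$: there $\inf\{\epsilon_i,\bar v(b-r_{k_0})\}=\inf\{\epsilon_i,\infty\}=\epsilon_i$, whereas $\bar v(\alpha-r_{k_0})=\gamma<\epsilon_i$, so the term-by-term comparison is strict at $k_0$ and hence $\beta_i>\bar v(Q_i(\alpha))$, which is the claim. The step deserving the most care is the very first move — choosing $b$ to \emph{maximize} $\bar v(\alpha-b)$ over the roots — since it is precisely this choice that makes the estimate $\bar v(b-r_k)\ge\bar v(\alpha-r_k)$ valid for all $k$ at once and, via the root $r_{k_0}=b$, also delivers the strictness; everything else is additivity of $\bar v$ and $w_{(b,\epsilon_i)}$ on the factorization of $Q_i$, the identity $w_i(Q_i)=\beta_i$, two applications of the ultrametric inequality, and routine bookkeeping with root multiplicities (and the automatic observation that $\alpha$ is not itself a root of $Q_i$).
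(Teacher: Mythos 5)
Your proposal is correct and follows essentially the same route as the paper's own proof: choose $b\in\Ro(Q_i)$ maximizing $\bar v(\alpha-b)$, expand both $\bar v(Q_i(\alpha))$ and $\beta_i=w_{(b,\epsilon_i)}(Q_i)$ as sums over the linear factors $X-r_k$, compare term by term using the ultrametric inequality and the choice of $b$, and obtain strictness from the factor $r_k=b$, where $w_{(b,\epsilon_i)}(X-b)=\epsilon_i>\bar v(\alpha-b)$. The only cosmetic difference is that you make the strictness argument more explicit by isolating the index $k_0$ with $r_{k_0}=b$, whereas the paper states the two ingredients (the termwise inequality and the strict case $c=b$) and lets the reader assemble them.
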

\begin{proof}
Choose $b\in\Ro(Q_i)$ such that $\bar{v}(\alpha-b)\geq \bar{v}(\alpha-c)$ for every $c\in \Ro(Q_i)$. Let
$\bar{w}_i:=w_{(b,\epsilon_i)}$. We will prove that

\begin{equation}\label{roots_kp_ineq_main_eq}
\bar{w}_i(X-c)\geq \bar{v}(\alpha-c)\ \text{ for\ every\ }c\in \Ro(Q_i)
\end{equation}

For every $c\in\Ro(Q_i)$, we have, $\bar{w}_i(X-c)=\inf\{\epsilon_i,\ \bar{v}(b-c)\}$.\\
Now, $\epsilon_i>\bar{v}(\alpha-c)$ by assumption and $\bar{v}(b-c)\geq \inf\{\bar{v}(b-\alpha)$ and
$\bar{v}(\alpha-c)\}\geq \bar{v}(\alpha-c)$, where the last inequality follows from the definition of $b$.\\

We have $Q_i(X)=\prod\limits_l(X-c_l)$, where the $c_l$ are the roots of $Q_i$, which need not be distinct. Now the result follows from the facts that $w_i(Q_i(X))=\bar{w}_i(Q_i(X))$, $\bar{w}_i(X-b)=\epsilon_i>\bar{v}(\alpha-c_l)$ and (\ref{roots_kp_ineq_main_eq}).

\end{proof}

\begin{lemma}\label{roots_kp_eq}
Let $j\in I$ be such that $\vartheta_j=\emptyset$ and for every $b\in\Ro(Q_j)$ the valuation $\bar{w}_{bj}:=w_{(b,\epsilon_j)}$ is an extension of $w_j$. Assume that $\Lambda_{j+1}\ne\emptyset$ and let $a\in \Ro(Q_{j+1})$. If there exists $b\in\Ro(Q_{j})$ such that $\bar{v}(a-b)\geq\epsilon_j$ then $\beta_j=w_j(Q_j)=\bar{v}(Q_j(a))$.
\end{lemma}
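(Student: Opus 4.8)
\textbf{Plan of proof for Lemma \ref{roots_kp_eq}.}

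The plan is to use the hypothesis $\vartheta_j=\emptyset$ together with the explicit shape of $Q_{j+1}$ given by Proposition \ref{kp_form}, and to reduce the equality $\bar v(Q_j(a))=\beta_j$ to a statement about the value $w_{(a,\epsilon_j)}(Q_{j+1})$. First I would record the consequences of the assumption ``$b\in\Ro(Q_j)$ with $\bar v(a-b)\ge\epsilon_j$''. By Lemma \ref{pairofdefinition}, this means $(a,\epsilon_j)$ and $(b,\epsilon_j)$ are pairs of definition for the \emph{same} valuation on $\bar K(X)$; since $\bar w_{bj}=w_{(b,\epsilon_j)}$ is by hypothesis an extension of $w_j$, so is $w_{(a,\epsilon_j)}$. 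Hence from now on one may freely compute with $w_{(a,\epsilon_j)}$, knowing its restriction to $K[X]$ is $w_j$. In particular $w_{(a,\epsilon_j)}(Q_j)=w_j(Q_j)=\beta_j$, and writing the Taylor expansion of $Q_j$ at $a$ one gets $\beta_j=\inf_t\{\bar v(\partial_t Q_j(a))+t\epsilon_j\}$; since $Q_j$ is monic of degree $d_j$ the term $t=d_j$ contributes $d_j\epsilon_j$, and I would like to see that the minimum is actually attained (only) at $t=0$, which would give $\beta_j=\bar v(Q_j(a))$ as desired.

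The key step is therefore to show $\bar v(Q_j(a))\le\bar v(\partial_t Q_j(a))+t\epsilon_j$ for all $t\ge 1$, i.e. that $t=0$ realizes the infimum. Here I would bring in $Q_{j+1}$. By Proposition \ref{kp_form}, since $\vartheta_j=\emptyset$ and $\Lambda_{j+1}\ne\emptyset$, we may write $Q_{j+1}=Q_j^{n}+q_{n-1}Q_j^{n-1}+\dots+q_0$ with $\deg q_t<d_j$ and $w(q_t)+t\beta_j=n\beta_j$ for all $t<n$; in particular $w_{j+1}(Q_{j+1})=n\beta_j$, and since $a\in\Ro(Q_{j+1})$ we have $Q_{j+1}(a)=0$. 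Evaluating $Q_{j+1}$ at $a$ gives
\begin{equation*}
Q_j(a)^{n}=-\bigl(q_{n-1}(a)Q_j(a)^{n-1}+\dots+q_0(a)\bigr).
\end{equation*}
Because each $q_t$ has degree $<d_j=d(w_j)$, Lemma \ref{minimal_pair} applied to the valuation $w_j$ and its minimal pair $(b,\epsilon_j)$ (equivalently $(a,\epsilon_j)$) gives $\bar v(q_t(a))=w(q_t)$; combined with $w(q_t)+t\beta_j=n\beta_j$ this is the arithmetic identity that pins everything down. The plan is: if $\bar v(Q_j(a))>\beta_j$ were to hold, then on the left $\bar v(Q_j(a)^n)>n\beta_j$, while each summand on the right has value $\bar v(q_t(a))+t\,\bar v(Q_j(a))> (n-t)\beta_j+t\beta_j=n\beta_j$ when $\bar v(Q_j(a))>\beta_j$ — wait, that is consistent, so I must instead rule out $\bar v(Q_j(a))<\beta_j$, which is exactly what Lemma \ref{roots_kp_ineq} would give if some root were far from $a$; since here by hypothesis $\bar v(a-b)\ge\epsilon_j$ for a root $b$, the argument of Lemma \ref{roots_kp_ineq} (with $\alpha=a$) instead yields $\bar v(Q_j(a))\ge\beta_j$, and then I would use the $Q_{j+1}$-relation above to force equality: if $\bar v(Q_j(a))>\beta_j$, the left side has value $>n\beta_j$ but the $t=0$ term $q_0(a)$ on the right has value exactly $w(q_0)=n\beta_j$ and all other terms have value $\bar v(q_t(a))+t\,\bar v(Q_j(a))>w(q_t)+t\beta_j=n\beta_j$, so the right side has value exactly $n\beta_j$, a contradiction. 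Hence $\bar v(Q_j(a))=\beta_j$.

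I expect the main obstacle to be the bookkeeping around \emph{which} inequality needs to be excluded and making sure Lemma \ref{minimal_pair} genuinely applies — that is, that $(a,\epsilon_j)$ is a minimal pair for $w_j$ (so that $d(w_j)=\deg Q_j=d_j$ and $\bar v(q_t(a))=w_j(q_t)=w(q_t)$ for $\deg q_t<d_j$). This requires knowing $\mathcal D(\bar w_{bj})=\deg Q_j$, which follows because $Q_j$ is a key polynomial for $w_j$ equal to the minimal polynomial of its roots and $\delta(Q_j)=\epsilon_j=\max(M_{\bar w_{bj}})$, so by Proposition \ref{mp_kp} applied to $w_j$ the pair $(b,\epsilon_j)$ is minimal; then Lemma \ref{pairofdefinition} transfers minimality to $(a,\epsilon_j)$. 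The rest is the two-line valuation estimate on the $Q_{j+1}$-relation, which is routine once the minimal-pair input is in place.
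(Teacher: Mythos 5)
Your proof is correct and follows essentially the same route as the paper: write $Q_{j+1}=\sum q_t Q_j^t$ via Proposition \ref{kp_form}, use Lemma \ref{minimal_pair} (applied to the pair $(a,\epsilon_j)$, which Lemma \ref{pairofdefinition} makes a pair of definition for $\bar w_{bj}$) to pin down $\bar v(q_t(a))=w(q_t)=(n-t)\beta_j$, and then extract $\bar v(Q_j(a))=\beta_j$ from $Q_{j+1}(a)=0$ by an ultrametric argument. The only variation is in the final step: the paper observes that the minimum among the values $\bar v(q_t(a)Q_j(a)^t)$ must be attained at two indices $t_1<t_2$ and solves the resulting linear relation directly for $\bar v(Q_j(a))$, whereas you first note $\bar v(Q_j(a))\ge\beta_j$ (which in fact falls out immediately from $\beta_j=w_{(a,\epsilon_j)}(Q_j)\le\bar v(Q_j(a))$, the $t=0$ term of the infimum --- you do not really need to re-run the argument of Lemma \ref{roots_kp_ineq} for this) and then rule out strict inequality by noting that the $t=0$ term would then strictly dominate the right-hand side while the left side $Q_j(a)^n$ has larger value. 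Both are two-line computations once the values of the $q_t(a)$ are in hand, and your justification that Lemma \ref{minimal_pair} applies with $\mathcal D(\bar w_{bj})=d_j$ is the same point the paper uses implicitly.
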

\begin{proof}
Choose $b\in\Ro(Q_{j})$ such that $\bar{v}(a-b)\geq \epsilon_j$, and let  $\bar{w}_{bj}:=w_{(b,\epsilon_j)}$. By Lemma \ref{pairofdefinition}, $(a,\epsilon_j)$ is also a pair of definition for $\bar{w}_{bj}$.\\

By Proposition \ref{kp_form} and the comment that follows it, we can write
$$
Q_{j+1}=q_nQ_j^{n}+\dots+q_tQ_j^t+\dots+q_0,\quad\text{ with }q_n=1,\ \deg q_t< d_j
$$
and
\begin{equation}\label{eq:homogeneous}
w(q_t)+t\beta_j=n\beta_j\quad\text{ for }t\in\{0,\dots,n\}.
\end{equation}
By (\ref{eq:homogeneous}), given integers $t_1,t_2$ with $0\le t_1<t_2$, we have $w(q_{t_1})+t_1\beta_j=w(q_{t_2})+t_2\beta_j$, so $w(Q_j(X))=\beta_j=\frac{w(q_{t_2}(X))-w(q_{t_1}(X))}{t_2-t_1}$. Combining this with Lemma \ref{minimal_pair}, we obtain
$$
w(Q_j(X))=\frac{\bar{v}(q_0(a))}n.
$$
Now $0=Q_{j+1}(a)=Q_j^{n}(a)+\dots+q_t(a)Q_j^t(a)+\dots+q_0(a)$. Therefore, there exist $t_1,t_2$, $0\leq t_1<t_2\leq n$ such that $\bar{v}\left(q_{t_1}(a)Q_j^{t_1}(a)\right)=\bar{v}\left(q_{t_2}(a)Q_j^{t_2}(a)\right)$.\\

Thus we have $\bar{v}(Q_j(a))=\frac{\bar{v}(q_{t_2}(a))-\bar{v}(q_{t_1}(a))}{t_2-t_1}=w(Q_j(X))$.

\end{proof}

\begin{lemma}\label{key_pol_roots_lem}
Let $j\in I$ be such that $\vartheta_j=\emptyset$ and for every $b\in\Ro(Q_j)$ the valuation $\bar{w}_{bj}:=w_{(b,\epsilon_j)}$ is an extension of $w_j$. Assume that $\Lambda_{j+1}\ne\emptyset$. Then for every root $a\in \Ro(Q_{j+1})$ there exists $b\in\Ro(Q_j)$ such that $\bar{v}(a-b)\geq\epsilon_j$.
\end{lemma}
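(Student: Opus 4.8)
The plan is to establish the stronger statement that $\bar v(Q_j(a))=\beta_j$ for \emph{every} $a\in\Ro(Q_{j+1})$. Granting this, the Lemma is immediate: if some $a\in\Ro(Q_{j+1})$ satisfied $\bar v(a-b)<\epsilon_j$ for all $b\in\Ro(Q_j)$, then Lemma~\ref{roots_kp_ineq}, applied with $i=j$ and $\alpha=a$ (its hypotheses hold by assumption), would force $\bar v(Q_j(a))<w_j(Q_j)=\beta_j$, contradicting $\bar v(Q_j(a))=\beta_j$.

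To prove the assertion I would derive an upper bound on $\bar v(Q_j(a))$ for roots $a$ of $Q_{j+1}$, a lower bound on $\bar v(Q_{j+1}(b))$ for roots $b$ of $Q_j$, and then compare the two through the resultant of $Q_j$ and $Q_{j+1}$. The upper bound is exactly the dichotomy of the preceding two lemmas: for $a\in\Ro(Q_{j+1})$, either there is $b\in\Ro(Q_j)$ with $\bar v(a-b)\ge\epsilon_j$, in which case Lemma~\ref{roots_kp_eq} gives $\bar v(Q_j(a))=\beta_j$, or $\bar v(a-b)<\epsilon_j$ for every $b\in\Ro(Q_j)$, in which case Lemma~\ref{roots_kp_ineq} gives $\bar v(Q_j(a))<\beta_j$; either way $\bar v(Q_j(a))\le\beta_j$. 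For the lower bound, fix $b\in\Ro(Q_j)$ and take the $Q_j$-expansion $Q_{j+1}=\sum_{t=0}^{n}q_tQ_j^{t}$ furnished by Proposition~\ref{kp_form}: every (nonzero) term $q_tQ_j^t$ has value $n\beta_j$, so $w_j(Q_{j+1})=n\beta_j$; since by hypothesis $w_{(b,\epsilon_j)}$ restricts to $w_j$ on $K[X]$, we get $w_{(b,\epsilon_j)}(Q_{j+1})=n\beta_j$. In the Taylor expansion of $Q_{j+1}$ about $b$ the constant term equals $Q_{j+1}(b)$, and $\bar v(Q_{j+1}(b))$ is one of the quantities over which the infimum defining $w_{(b,\epsilon_j)}(Q_{j+1})$ is taken, hence $\bar v(Q_{j+1}(b))\ge w_{(b,\epsilon_j)}(Q_{j+1})=n\beta_j$.

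Next I would glue these together. Since $Q_j$ and $Q_{j+1}$ are monic, irreducible and of distinct degrees $d_j<d_{j+1}=n\,d_j$, they are coprime, so $\mathrm{Res}(Q_j,Q_{j+1})\ne0$ and, counting roots with multiplicity and writing $m_b$ (resp. $m_a$) for the multiplicity of $b$ as a root of $Q_j$ (resp. of $a$ as a root of $Q_{j+1}$), the standard product formulas for the resultant give
\[
\sum_{b\in\Ro(Q_j)}m_b\,\bar v\bigl(Q_{j+1}(b)\bigr)=\bar v\bigl(\mathrm{Res}(Q_j,Q_{j+1})\bigr)=\sum_{a\in\Ro(Q_{j+1})}m_a\,\bar v\bigl(Q_j(a)\bigr),
\]
with $\sum_b m_b=\deg Q_j=d_j$ and $\sum_a m_a=\deg Q_{j+1}=n\,d_j$. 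By the lower bound the left-hand side is $\ge n\beta_j\,d_j$, and by the upper bound the right-hand side is $\le\beta_j\,n\,d_j$; hence both sides equal $n\,d_j\,\beta_j$. On the right every summand satisfies $\bar v(Q_j(a))\le\beta_j$ and $m_a>0$, so equality forces $\bar v(Q_j(a))=\beta_j$ for every $a\in\Ro(Q_{j+1})$, which is the assertion.

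The step I expect to be the crux is recognizing that the \emph{cheap} inequality $\bar v(Q_{j+1}(b))\ge n\beta_j$, coming straight from the infimum defining $w_{(b,\epsilon_j)}$, already suffices once one exploits the symmetry of the resultant. A more local approach — trying to estimate $\bar v(Q_{j+1}(a))$ directly from its $Q_j$-expansion — seems to require the bound $\bar v(q_t(a))\ge w(q_t)$ for a root $a$ of $Q_{j+1}$, which is not available without essentially the conclusion itself, so it is the global count that makes the argument go through. A secondary technical point is the correct bookkeeping of root multiplicities in the resultant identity in the case where $Q_j$ or $Q_{j+1}$ fails to be separable (possible only in positive characteristic).
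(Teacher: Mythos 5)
Your proof is correct and follows essentially the same route as the paper: compare the two products appearing in the resultant identity for $Q_j$ and $Q_{j+1}$, using the dichotomy of Lemmas~\ref{roots_kp_ineq} and~\ref{roots_kp_eq} for the bound $\bar v(Q_j(a))\le\beta_j$. The one minor variation is that you only establish the \emph{inequality} $\bar v(Q_{j+1}(b))\ge n\beta_j$ from the infimum defining $w_{(b,\epsilon_j)}$, whereas the paper computes the \emph{exact} value $\bar v(Q_{j+1}(b))=\bar v(q_0(b))=w_j(q_0)=n\beta_j$ using $Q_j(b)=0$ and Lemma~\ref{minimal_pair}; your inequality is all the comparison needs, so both work.
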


\begin{proof}
By Proposition \ref{kp_form} and the comment that follows it, we can write
$$
Q_{j+1}=q_nQ_j^{n}+\dots+q_tQ_j^t+\dots+q_0,\quad\text{ where }q_n=1,\ \deg q_t< d_j\text{ and (\ref{eq:homogeneous}) is satisfied}.
$$
Let $b_1,\dots,b_t$ be the roots of $Q_j$ and $a_1,\dots,a_s$ the roots of $Q_{j+1}$ (not necessarily distinct). We have $s=n\cdot t$ and the resultant of $Q_{j}$ and $Q_{j+1}$ is given by
\begin{equation}
\prod\limits_{\ell=1}^{t}Q_{j+1}(b_\ell)=(-1)^{nt^2}\prod\limits_{k=1}^{s}Q_j(a_k).
\end{equation}

For each $\ell$, $1\leq \ell\leq t$, we have $\bar{v}(Q_{j+1}(b_\ell))=\bar{v}(q_0(b_\ell))=w_j(q_0(X))=n\beta_j$, where the second equality is obtained from Lemma \ref{minimal_pair}. We obtain
 
\begin{equation}
\bar{v}\left(\prod\limits_{k=1}^{s}Q_j(a_k)\right)=\bar{v}\left(\prod\limits_{\ell=1}^{t}Q_{j+1}(b_\ell)\right)=tn\beta_j=s\beta_j.
\end{equation}
Hence
\begin{equation}\label{key_pol_roots_eq}
\sum\limits_{k=1}^{s}\bar{v}(Q_j(a_k))=s\beta_j.
\end{equation}

By Lemmas \ref{roots_kp_ineq} and \ref{roots_kp_eq} we have, for each $k$, $1\leq k\leq s$, $\bar{v}(Q_j(a_k))\leq \beta_j$ (where we apply Lemma \ref{roots_kp_ineq} in the case when for every root $b\in\Ro(Q_i)$ we have $\bar{v}(a_k-b)<\epsilon_i$ and Lemma \ref{roots_kp_eq} in the case when there exists $b\in\Ro(Q_{j})$ such that $\bar{v}(a_k-b)\geq\epsilon_j$). Hence, in view of (\ref{key_pol_roots_eq}) we must have the equality $\bar{v}(Q_j(a_k))= \beta_j$, for each $k$, $1\leq k\leq s$.\\

Therefore, by Lemma \ref{roots_kp_ineq}, for each $k$, $1\leq k\leq s$, there exists $\ell$, $1\leq \ell\leq t$, such that $\bar{v}(a_k-b_\ell)\geq \epsilon_j$.\\
\end{proof}
\begin{proposition}\label{key_pol_roots}
Let $j\in I$ be such that $\vartheta_j=\emptyset$ and consider $b\in\Ro(Q_j)$ such that $\bar{w}_{bj}:=w_{(b,\epsilon_j)}$ is an extension of $w_j$. Assume that $\Lambda_{j+1}\ne\emptyset$, and let $a\in \Ro(Q_{j+1})$ be such that
$\bar{v}(a-b)\geq\epsilon_j$. Then $w_{(a,\epsilon_{j+1})}$ is an extension of $w_{j+1}$.
\end{proposition}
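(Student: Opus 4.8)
The plan is to deduce the statement from Lemma~\ref{criterion_roots_mp}, applied with the valuation $w_{j+1}$ playing the role of $w$. First I would note that $w_{j+1}$ is a valuation-transcendental extension of $v$ (by Corollary~\ref{rt_kpresult} when $j+1$ is not the maximal element of $\Lambda$, and by Theorem~\ref{valuation_trans} when it is), whose complete sequence of key polynomials is $\{Q_i\}_{i\in\Lambda,\,i\le j+1}$ with last element $Q_{j+1}$; hence $d(w_{j+1})=d_{j+1}$. Fixing a common extension $\bar w_{j+1}$ of $\bar v$ and $w_{j+1}$ to $\bar K(X)$ (which exists by Proposition~2.1 of \cite{APZ}), Theorem~\ref{valuation_trans} gives $\bar w_{j+1}=w_{(a_0,\epsilon_{j+1})}$ for some $a_0\in\Ro(Q_{j+1})$ and $\D(\bar w_{j+1})=d_{j+1}=[K(a_0):K]$; thus $(a_0,\epsilon_{j+1})$ is a minimal pair for $\bar w_{j+1}$ and $Q_{j+1}$ (being a key polynomial, hence irreducible) is the minimal polynomial of $a_0$ over $K$. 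Applying Lemma~\ref{criterion_roots_mp} to $\bar w_{j+1}$, the pair $(a_0,\epsilon_{j+1})$ and the root $a\in\Ro(Q_{j+1})$, and noting that $w_{(a,\epsilon_{j+1})}$ always extends $\bar v$ so that being a common extension of $\bar v$ and $w_{j+1}$ is the same as being an extension of $w_{j+1}$, the Proposition reduces to the claim that $\bar v(g(a))=w_{j+1}(g)$ for every $g\in K[X]$ with $\deg g<d_{j+1}$.

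Next I would prove the inequality $\bar v(g(a))\ge w_{j+1}(g)$ for $\deg g<d_{j+1}$. By Lemma~\ref{pairofdefinition}, the hypothesis $\bar v(a-b)\ge\epsilon_j$ makes $(a,\epsilon_j)$ a pair of definition for $\bar w_{bj}=w_{(b,\epsilon_j)}$, which is a common extension of $\bar v$ and $w_j$ with $\D(\bar w_{bj})=d(w_j)=d_j$ (Theorem~\ref{valuation_trans} applied to $w_j$). Writing the $Q_j$-expansion $g=\sum_{k=0}^{n-1}g_kQ_j^k$ with $n=d_{j+1}/d_j$ and $\deg g_k<d_j$, Lemma~\ref{minimal_pair} applied to $\bar w_{bj}$ with the pair $(a,\epsilon_j)$ gives $\bar v(g_k(a))=w_j(g_k)=w(g_k)$ for every $k$, while arguing as in the proof of Lemma~\ref{roots_kp_eq}---whose argument uses only the single root $b$ together with the fact that $w_{(b,\epsilon_j)}$ extends $w_j$---yields $\bar v(Q_j(a))=\beta_j$. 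Hence each term satisfies $\bar v\big(g_k(a)Q_j(a)^k\big)=w(g_k)+k\beta_j=w\big(g_kQ_j^k\big)$, so $\bar v(g(a))\ge\min_k w\big(g_kQ_j^k\big)=w_j(g)=w(g)=w_{j+1}(g)$: the first equality is the definition of the truncation $w_j=w_{Q_j}$, the middle one uses that, $\vartheta_j$ being empty, $Q_{j+1}$ is not a limit key polynomial, so by the construction of \cite{J1} the truncation $w_j$ agrees with $w$ on polynomials of degree $<d_{j+1}$, and the last one holds because the $Q_{j+1}$-expansion of $g$ is trivial.

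Finally I would upgrade this to an equality. For arbitrary $g\in K[X]$, Euclidean division by $Q_{j+1}$ gives $g=qQ_{j+1}+r$ with $\deg r<d_{j+1}$, so $g(a)=r(a)$, $g(a_0)=r(a_0)$, and $\bar v(r(a_0))=w_{j+1}(r)$ by Lemma~\ref{minimal_pair} applied to $\bar w_{j+1}$; together with the inequality just established this gives $\bar v(g(a))=\bar v(r(a))\ge w_{j+1}(r)=\bar v(r(a_0))=\bar v(g(a_0))$. Thus, identifying $K(a)$ and $K(a_0)$ with $K[X]/(Q_{j+1})$, the restriction of $\bar v$ to $K(a)$ takes, on each element, a value at least as large as its restriction to $K(a_0)$, while both restrict to $v$ on $K$. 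Since distinct extensions of a valuation to a finite field extension are incomparable---the value group modulo $\g_v$ being a torsion group leaves no room for a proper coarsening still restricting to $v$ on $K$---these two valuations coincide, so $\bar v(g(a))=\bar v(g(a_0))=w_{j+1}(g)$ for all $g$ of degree $<d_{j+1}$, which is exactly the claim needed to invoke Lemma~\ref{criterion_roots_mp}.

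The main obstacle is the inequality $\bar v(g(a))\ge w_{j+1}(g)$: it is what forces one to observe that $(a,\epsilon_j)$ is a (not necessarily minimal) pair of definition of $\bar w_{bj}$, so that Lemma~\ref{minimal_pair} and the computation behind Lemma~\ref{roots_kp_eq} become available at the root $a$, and to use the defining property of the complete sequence that $w_j=w$ below degree $d_{j+1}$; turning this one-sided estimate into the required equality then relies on the standard fact that the extensions of $v$ to the finite extension $K(a)$ are pairwise incomparable.
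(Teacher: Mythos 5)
Your proof is correct and follows the same overall skeleton as the paper's: both reduce, via Lemma~\ref{criterion_roots_mp} applied to $w_{j+1}$ (whose complete sequence is the truncated sequence with last element $Q_{j+1}$, so $d(w_{j+1})=d_{j+1}$), to the single claim that $\bar v(g(a))=w_{j+1}(g)=w(g)$ for $\deg g<d_{j+1}$; and both feed into this the two facts that $(a,\epsilon_j)$ is a pair of definition of $\bar w_{bj}$ (hence $\bar v(g(a))=w_j(g)$ for $\deg g<d_j$, by Lemma~\ref{minimal_pair}) and that $\bar v(Q_j(a))=\beta_j$ (the argument of Lemma~\ref{roots_kp_eq}, which, as you correctly note, needs only the single root $b$). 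Where you diverge is in how the two one-sided estimates are obtained. For the inequality $\bar v(g(a))\ge w(g)$, the paper picks a minimal-degree counterexample, does Euclidean division by $Q_j$, and derives a contradiction; you instead compute term-by-term along the full $Q_j$-expansion of $g$, which gives the same bound directly and without any induction on degree---a mild but genuine simplification. For the reverse inequality the routes are genuinely different: the paper again assumes a minimal-degree counterexample, divides $Q_{j+1}$ by $g$, and contradicts the equality $\bar v(R(a))=\bar v(Q(a)g(a))$ forced by $Q_{j+1}(a)=0$; you instead compare $a$ with a root $a_0$ for which $w_{(a_0,\epsilon_{j+1})}$ is already known to extend $w_{j+1}$, show $\bar v(g(a))\ge\bar v(g(a_0))$ for all $g$, and invoke the fact that on a field a valuation cannot strictly dominate another pointwise. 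Your version trades the explicit algebraic manipulation of $Q_{j+1}$ for a soft field-theoretic principle; it is arguably cleaner, although the justification you give for that principle is heavier than needed. There is no appeal to coarsenings or to $\g_{\bar v}/\g_v$ being torsion: if $\nu_1,\nu_2$ are valuations on the same field $L$ taking values in the same ordered group (here both are restrictions of $\bar v$, with values in $\g_{\bar v}$) and $\nu_1(x)\ge\nu_2(x)$ for every $x\in L^\times$, then applying this to $x$ and $x^{-1}$ and using $\nu_i(x)+\nu_i(x^{-1})=0$ forces $\nu_1=\nu_2$. With that phrasing your final step is elementary and self-contained.
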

\begin{proof}
Let $\bar{w}_{j+1}$ be a common extension of $\bar{v}$ and $w_{j+1}$ to $\bar{K}(X)$. By Theorem \ref{valuation_trans}, there exists a root $a_1$ of $Q_{j+1}$ such that $\bar{w}_{j+1}=w_{(a_1,\delta)}$. If $a=a_1$, we are done, there is nothing to prove. Otherwise, assume that $a\neq a_1$.\\

In view of Lemma \ref{criterion_roots_mp}, it is sufficient to prove that for all $g\in K[X]$ with $\deg g<d_{j+1}$ we have
$\bar{v}(g(a))=w(g(X))$.\\

By Lemma \ref{pairofdefinition}, $(a,\epsilon_j)$ is also a pair of definition for $\bar{w}_{bj}$. Therefore, by Lemma \ref{minimal_pair}, 
\begin{equation}
\text{for all }g\in K[X]\text{ with }\deg\ g<d_{j}\text{ we have }\bar{v}(g(a))=w_j(g(X))=w(g(X)).\label{eq:equalityfordegg<dj+1}
\end{equation}
By Lemma \ref{roots_kp_eq}, we have
$\bar{v}(Q_j(a))=\beta_j$.\\

We will first prove, by contradiction, that for all $g\in K[X]$ with $\deg g<d_{j+1}$, we have $\bar{v}(g(a))\geq w(g(X))$.\\

Assume that there exists $g\in K[X]$, $\deg g<d_{j+1}$, such that $\bar{v}(g(a))< w(g(X))$. Choose $g$ to be of minimal degree subject to this inequality. By (\ref{eq:equalityfordegg<dj+1}) we must have
\begin{equation}
\deg g\geq d_j.\label{eq:degreelessorequal}
\end{equation}
Let $g(X)=q(X)Q_j(X)+r(X)$ be the Euclidean division of $g(X)$ by $Q_j(X)$, with
$$
\deg r< d_j.
$$
Note that
\begin{equation}
\deg q<\deg g.\label{eq:degq<degg}
\end{equation}
We have 
\begin{align*}
\bar{v}(g(a))&\geq \inf\left\{\bar{v}(q(a))+\bar{v}(Q_j(a)),\ \bar{v}(r(a))\right\}\\
&\geq \inf\left\{w_j(q(X))+w_j(Q_j(X)),\ w_j(r(X))\right\}\\
&=w_j(g(X))\\
&=w(g(X)),
\end{align*}
where the second inequality follows from (\ref{eq:equalityfordegg<dj+1}), (\ref{eq:degq<degg}), the assumed minimality of $\deg\ g$ and the fact that $(a,\epsilon_j)$ is a pair of definition for $\bar{w}_{bj}$ which extends $w_j$. The above inequality contradicts our assumption.

Again, aiming for contradiction, we will assume that there exists
$$
g\in K[X],\quad d_j\leq \deg g<d_{j+1}
$$
such that $\bar{v}(g(a))>w(g(X))$. Assume that $g$ is chosen of minimal degree subject to this inequality.

Let $Q_{j+1}=Qg+R$ be the Euclidean division of $Q_{j+1}(X)$ by $g(X)$, with $\deg R< \deg g$.\\

We have $w(Q_{j+1})>w(Qg)=w(R)$, since $w_j(Qg)=w(Qg)$ and $w_j(R)=w(R)$ but $w(Q_{j+1})>w_j(Q_{j+1})$.

Moreover, we have $0=Q_{j+1}(a)=Q(a)g(a)+R(a)$. Therefore, we must have
$$
\bar{v}(R(a))=\bar{v}(Q(a)g(a)).
$$
But $\bar{v}(R(a))=w(R(X))$ by the assumed minimality of the degree of $g$. Hence we must have $w(Q(X)g(X))=\bar{v}(Q(a)g(a))$, but $w(Q(X))\leq \bar{v}(Q(a))$ and $w(g(X))<\bar{v}(g(a))$ which is impossible.
\end{proof}

As a direct consequence of Lemma \ref{key_pol_roots_lem} and Proposition \ref{key_pol_roots} we have the following Theorem:
\begin{theorem}\label{key_pol_all_roots}
Let $j\in I$ be such that $\vartheta_j=\emptyset$, and for every $b\in\Ro(Q_j)$, $\bar{w}_{bj}:=w_{(b,\epsilon_j)}$ is an extension of $w_j$. Assume that $\Lambda_{j+1}\ne\emptyset$. Then for every root $a\in \Ro(Q_{j+1})$, $w_{(a,\epsilon_{j+1})}$ is an extension of $w_{j+1}$.
\end{theorem}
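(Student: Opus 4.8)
The final statement, Theorem \ref{key_pol_all_roots}, is an immediate corollary of the two preceding results, so the plan is essentially to assemble them. Here is how I would organize it.

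\textbf{The approach.} I would argue that Lemma \ref{key_pol_roots_lem} and Proposition \ref{key_pol_roots} fit together exactly: the lemma guarantees that every root $a\in\Ro(Q_{j+1})$ is close to \emph{some} root $b\in\Ro(Q_j)$ (in the sense $\bar v(a-b)\ge\epsilon_j$), and the proposition then promotes that closeness into the conclusion that $w_{(a,\epsilon_{j+1})}$ extends $w_{j+1}$. So the proof is just: fix an arbitrary root $a\in\Ro(Q_{j+1})$; invoke Lemma \ref{key_pol_roots_lem} under the standing hypotheses ($\vartheta_j=\emptyset$, each $\bar w_{bj}=w_{(b,\epsilon_j)}$ extends $w_j$, $\Lambda_{j+1}\ne\emptyset$) to produce $b\in\Ro(Q_j)$ with $\bar v(a-b)\ge\epsilon_j$; then apply Proposition \ref{key_pol_roots} to this pair $(a,b)$ — its hypotheses are literally the conjunction of the standing hypotheses and the inequality just obtained — and read off that $w_{(a,\epsilon_{j+1})}$ is an extension of $w_{j+1}$. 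Since $a$ was arbitrary, this holds for every root of $Q_{j+1}$.

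\textbf{What needs checking.} The only genuine task is to confirm that the hypothesis bookkeeping matches. Lemma \ref{key_pol_roots_lem} needs: $\vartheta_j=\emptyset$; for every $b\in\Ro(Q_j)$ the valuation $w_{(b,\epsilon_j)}$ extends $w_j$; and $\Lambda_{j+1}\ne\emptyset$. All three are in the statement of Theorem \ref{key_pol_all_roots}. Proposition \ref{key_pol_roots} needs: $\vartheta_j=\emptyset$; a \emph{particular} $b\in\Ro(Q_j)$ with $w_{(b,\epsilon_j)}$ extending $w_j$; $\Lambda_{j+1}\ne\emptyset$; and $a\in\Ro(Q_{j+1})$ with $\bar v(a-b)\ge\epsilon_j$. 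The particular $b$ and the inequality are exactly what the lemma just handed us, and the rest is again in the hypotheses. There is no obstacle here beyond this matching; the substantive work was already done inside Lemmas \ref{roots_kp_ineq}, \ref{roots_kp_eq}, \ref{key_pol_roots_lem} and Proposition \ref{key_pol_roots} (in particular the resultant computation in Lemma \ref{key_pol_roots_lem} and the two minimal-degree contradiction arguments in Proposition \ref{key_pol_roots}).

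\textbf{Proposed proof text.}

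\begin{proof}
Let $a\in\Ro(Q_{j+1})$ be an arbitrary root. By hypothesis $\vartheta_j=\emptyset$, for every $b\in\Ro(Q_j)$ the valuation $\bar{w}_{bj}=w_{(b,\epsilon_j)}$ is an extension of $w_j$, and $\Lambda_{j+1}\ne\emptyset$; thus the hypotheses of Lemma \ref{key_pol_roots_lem} are satisfied. Applying that lemma to $a$, we obtain a root $b\in\Ro(Q_j)$ with $\bar{v}(a-b)\ge\epsilon_j$.

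Now this $b$ satisfies the hypotheses of Proposition \ref{key_pol_roots}: indeed $\vartheta_j=\emptyset$, $\bar{w}_{bj}=w_{(b,\epsilon_j)}$ is an extension of $w_j$, $\Lambda_{j+1}\ne\emptyset$, and $a\in\Ro(Q_{j+1})$ satisfies $\bar{v}(a-b)\ge\epsilon_j$. By Proposition \ref{key_pol_roots}, $w_{(a,\epsilon_{j+1})}$ is an extension of $w_{j+1}$. Since $a\in\Ro(Q_{j+1})$ was arbitrary, the conclusion holds for every root of $Q_{j+1}$.
\end{proof}
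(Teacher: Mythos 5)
Your proof is correct and matches the paper's approach exactly: the paper introduces Theorem \ref{key_pol_all_roots} with the words ``As a direct consequence of Lemma \ref{key_pol_roots_lem} and Proposition \ref{key_pol_roots}'' and leaves the bookkeeping implicit, which is precisely what you have made explicit.
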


\begin{corollary}\label{allroots}
Let $j_0\in I$ be such that $w=w_{j_0}$ and assume that for every $j\in I$ we have $\vartheta_j=\emptyset$. Then for every root $a\in \Ro(Q_{j_0})$ the valuation $w_{(a,\epsilon_{j_0})}$ is an extension of $w$.
\end{corollary}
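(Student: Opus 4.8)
The plan is to deduce the Corollary from Theorem \ref{key_pol_all_roots} by a finite induction on $j$. First I would record two preliminary observations. Since $w=w_{j_0}$, Remark \ref{uniqueness} forces $j_0$ to be the maximal element of $\Lambda$; and since $\vartheta_j=\emptyset$ for every $j\in I$, we have $\Lambda_j=\{j\}$ for all $j$, hence $\Lambda=I=\{0,\dots,j_0\}$. In particular, for every $j$ with $0\le j<j_0$ we have $j+1\in I$ and $\Lambda_{j+1}=\{j+1\}\ne\emptyset$, so the "index-set" hypotheses of Theorem \ref{key_pol_all_roots} ($\vartheta_j=\emptyset$ and $\Lambda_{j+1}\ne\emptyset$) hold automatically at every stage.

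The assertion I would prove by induction on $j$, for $0\le j\le j_0$, is: for every $b\in\Ro(Q_j)$ the valuation $w_{(b,\epsilon_j)}$ is an extension of $w_j$. For the base case $j=0$, by Remark \ref{prop_key_pol} 2 we have $Q_0=X-a_0$ with $a_0\in K$, so $\Ro(Q_0)=\{a_0\}$; comparing the $Q_0$-expansion of any $g\in K[X]$ with its Taylor expansion about $a_0$ shows that $w_{(a_0,\beta_0)}$ restricted to $K(X)$ coincides with $w_{Q_0}=w_0$, and since $\epsilon_0=\beta_0$ (Example \ref{ExampleX}), the base case holds. For the inductive step, assuming the assertion for some $j<j_0$, I would apply Theorem \ref{key_pol_all_roots} verbatim: its hypotheses are precisely $\vartheta_j=\emptyset$, $\Lambda_{j+1}\ne\emptyset$, and "$w_{(b,\epsilon_j)}$ extends $w_j$ for every $b\in\Ro(Q_j)$", all of which are in hand, and its conclusion is exactly the assertion for $j+1$. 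Taking $j=j_0$ then yields: for every $a\in\Ro(Q_{j_0})$, $w_{(a,\epsilon_{j_0})}$ is an extension of $w_{j_0}=w$, which is the claim.

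I do not expect a genuine obstacle in this argument: all the substantive work is already contained in Theorem \ref{key_pol_all_roots} and the chain of results feeding it (Lemmas \ref{roots_kp_ineq}, \ref{roots_kp_eq}, \ref{key_pol_roots_lem} and Proposition \ref{key_pol_roots}), so the Corollary is essentially a bookkeeping induction. The only points requiring a little care are the identification $w_{(a_0,\epsilon_0)}=w_0$ in the base case and the observation that $\Lambda_{j+1}\ne\emptyset$ for every $j<j_0$; both are immediate consequences of Remark \ref{prop_key_pol}, Remark \ref{uniqueness}, and the hypothesis $\vartheta_j=\emptyset$.
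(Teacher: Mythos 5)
Your proposal is correct and follows the paper's own argument: a finite induction on $j\le j_0$, with the base case supplied by Example \ref{ExampleX} (equivalently, the direct comparison of the $Q_0$-expansion with the Taylor expansion about $a_0$) and the inductive step supplied by Theorem \ref{key_pol_all_roots}. The extra bookkeeping you include — checking that $\vartheta_j=\emptyset$ forces $\Lambda=I=\{0,\dots,j_0\}$ so that $\Lambda_{j+1}\ne\emptyset$ at every stage — is exactly the observation the paper leaves implicit.
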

\begin{proof}
We use induction on $j\leq j_0$. The base of the induction is nothing but Example \ref{ExampleX}. The induction step is given by Theorem \ref{key_pol_all_roots}.
\end{proof}

\end{document}